\documentclass[]{siamltex}
\usepackage{subfigure}

\usepackage{amssymb,amsmath,newlfont,enumerate}

\usepackage{graphicx,epsfig,color}
\usepackage{latexsym}
\usepackage{bm}
\usepackage{float}
\usepackage{multirow}

\usepackage[english]{babel}

\hyphenation{an-iso-tropy}
\hyphenation{an-iso-tropic}

\def\paragraph#1{{\bf #1\ }}


\def\pd#1#2{\dfrac{\partial #1}{\partial #2}}

\newtheorem{remark}[section]{Remark}

\def\MM{\mathbb{M}}

\def\RR{\mathbb{R}}

\def\AA{\mathcal{A}}

\def\VV{\mathcal{V}}
\def\GG{\mathcal{G}}

\let\ds\displaystyle
\let\eps\varepsilon


\title{Anisotropic finite elements with high aspect ratio for an
  Asymptotic Preserving method for highly anisotropic elliptic
  equations}

\author{Jacek Narski\thanks{Universit\'e de Toulouse, UPS, Institut de
  Math\'ematiques de Toulouse, F-31062 Toulouse, France}}

\begin{document}

\maketitle

\begin{abstract}
  The concern of this work is the generalization of an
  Asymptotic Preserving method for the highly anisotropic elliptic
  equations presented in \cite{DLNN}. The limitations of the method
  introduced there in are omitted by the introduction of a
  stabilization term in the Asymptotic Reformulation. Furthermore,
  anisotropic error indicators and mesh adaptation algorithms are
  proposed and tested allowing to reduce considerably the number of
  mesh points required to achieve prescribed precision. Reported
  meshes have maximum aspect ratio greater than 500.
\end{abstract}

\begin{keywords}
  anisotropic adaptive finite elements, singular perturbation problem,
  asymptotic preserving reformulation
\end{keywords}

\begin{AMS}
  65N30, 65N20, 65N50
\end{AMS}

\section{Introduction}

Anisotropic problems are common in mathematical modeling of physical
problems. They appear in various fields of application, such as flows
in porous media \cite{porous1,TomHou}, semiconductor modeling
\cite{semicond}, quasi-neutral plasma simulations \cite{Navoret},
image processing \cite{image1,Weickert}, atmospheric or oceanic flows
\cite{ocean} and so on, the list being not exhaustive. The direct
motivation of this work is related to numerical simulations of
strongly magnetized plasma such as internal fusion plasma of tokamak
\cite{Beer,Sangam}, atmospheric plasma \cite{Kelley2,Kes_Oss} or
plasma thrusters \cite{SPT}. In this context a strong magnetic field
is defining the anisotropy direction. Fast rotation of charged
particles around magnetic field lines is causing a large number of
collisions in the plane perpendicular to the magnetic field. On the
other hand the motion in the direction of the field is rather
undisturbed. In consequence the particle mobility depends on the
direction and may differ by several orders of magnitude. Anisotropy
ratio $1/\varepsilon $ can be as high as $10^{10}$. 

The main difficulty associated with these anisotropic problems is that
they are singular in the limit $\varepsilon \rightarrow 0$. On the
discrete level this is manifested by very bad conditioning of linear
systems obtained by a direct discretization of the problem for
$\varepsilon \ll 1$. 
In this paper we propose an approach based on
the Asymptotic Preserving reformulation introduced initially by Shi
Jin in \cite{ShiJin}. Our approach is an extension of the method
proposed in a previous paper \cite{DLNN} to the case of more general
anisotropy field structure (such as closed field lines).

The model problem we are interested in, reads
\begin{gather}
  \left\{ 
    \begin{array}{ll}
      - \nabla \cdot \mathbb A_\varepsilon  \nabla u^{\varepsilon } = f       & \text{ in }
      \Omega, \\[3mm]
      n \cdot \mathbb A_\varepsilon  \nabla u^{\varepsilon }= 0
      & \text{ on } \Gamma_N\,,\\[3mm]
      u^{\varepsilon }= 0
      & \text{ on } \Gamma_D\,,
    \end{array}
  \right.
  \label{InitP}
\end{gather}
where $\Omega \subset \RR^{2}$ is a
bounded domain with boundary $\partial \Omega = \Gamma_D \cup
\Gamma_N$ and outward normal $n$. The direction of the anisotropy is
given by a vector field $B$, where we suppose $\text{div} B = 0$ and
$B \neq 0$. The direction of $B$ shall be denoted by the unit vector field
$b = B/|B|$. The domain boundary is decomposed into $\Gamma_D:= \{ x
\in \partial\Omega \ | \ b (x) \cdot n = 0 \}$
and $\Gamma_N:= \partial \Omega \backslash \Gamma_D$.  The
anisotropic diffusion matrix is then given by
\begin{gather} 
  \mathbb A_\varepsilon  = \frac{1}{\varepsilon }
  A_\parallel b \otimes b + (Id - b \otimes b)A_\perp (Id - b \otimes
  b)\,.
  \label{eq:Jh0a}
\end{gather}
The scalar field $A_\parallel>0$ and the symmetric positive definite
matrix field $A_\perp$ are of order one while the parameter $0 <
\varepsilon < 1$ can be very small, provoking thus the high anisotropy
of the problem.  The system becomes ill posed if we consider the
formal limit $\eps \rightarrow 0$. It is thus very ill conditioned for
$\eps \ll 1$. 

This problem has been studied before in the Asymptotic Preserving
context. A special case of anisotropy direction aligned with one of
the coordinate axis was addressed in \cite{DDN}. A generalization of
this approach was presented in \cite{besse}, where the problem with
curvilinear anisotropy field was reduced to one with the anisotropy
direction aligned with the coordinate system by a change of
variables. Another work \cite{DDLNN} proposed a different
generalization based rather on the introduction of Lagrange
multipliers. This resulted in a considerably bigger linear system but
allowed to avoid a necessity of change of variables which could be
troublesome for time dependent anisotropy direction. Finally, a
different method presented in \cite{DLNN} allowed to reduce
considerably computational cost without any adaptation of the
coordinate system. All those methods shared the same drawback: they
didn't allow more complex geometries such as the presence of closed
field lines.

In this paper we introduce yet another Asymptotic Preserving scheme,
improving the idea presented in \cite{DLNN} and removing the
restrictions on the anisotropy direction by a simple penalty
stabilization technique. Furthermore, the anisotropic error indicator
is presented and the mesh adaptation algorithm developed in order to
optimize the number of mesh points required to obtain a prescribed error.

The outline of the paper is following. Section \ref{sec:prob_def}
contains a definition of the problem and introduces the Asymptotic
Preserving reformulation. Section \ref{sec:num_met} describes an
anisotropic error indicator and mesh adaptation algorithm. They are
both tested and the numerical results are provided.

\section{Problem definition}\label{sec:prob_def}
We consider a two dimensional anisotropic problem, given on a regular,
bounded domain $\Omega \subset \mathbb R^2$, with boundary $\partial
\Omega$. The direction of the anisotropy is defined by the vector
field $b(x)$, which satisfies the following hypothesis\\

\noindent {\bf Hypothesis A} {\it The field $b(x)$ is derived from a
  vector field $B(x)= |B(x)|\, b(x)$, satisfying $div\,\, B(x)=0$ and
  $|b(x)|=1$ for all $x \in \Omega$. Moreover, we suppose that $b \in
  (C^{\infty}(\Omega))^d$.}\\

\noindent Given this vector field $b$, one can decompose now vectors
$v \in \mathbb R^2$, gradients $\nabla \phi$, with $\phi(x)$ a scalar
function, and divergences $\nabla \cdot v$, with $v(x)$ a vector
field, into a part parallel to the anisotropy direction and a part
perpendicular to it.  These parts are defined as follows :
\begin{equation} 
  \begin{array}{llll}
    \ds v_{||}:= (v \cdot b) b \,, & \ds v_{\perp}:= (Id- b \otimes b) v\,, &\textrm{such that}&\ds
    v=v_{||}+v_{\perp}\,,\\[3mm]
    \ds \nabla_{||} \phi:= (b \cdot \nabla \phi) b \,, & \ds
    \nabla_{\perp} \phi:= (Id- b \otimes b) \nabla \phi\,, &\textrm{such that}&\ds
    \nabla \phi=\nabla_{||}\phi+\nabla_{\perp}\phi\,,\\[3mm]
    \ds \nabla_{||} \cdot v:= \nabla \cdot v_{||}  \,, & \ds
    \nabla_{\perp} \cdot v:= \nabla \cdot v_{\perp}\,, &\textrm{such that}&\ds
    \nabla \cdot v=\nabla_{||}\cdot v+\nabla_{\perp}\cdot v\,,
  \end{array}
\end{equation}
where we denoted by $\otimes$ the vector tensor product. With these
notations we can now introduce the mathematical problem, the so-called
Singular Perturbation problem, whose numerical resolution is the main
concern of this paper.

\subsection{The Singular Perturbation problem (P-model)}

The objective of this paper is to introduce an efficient scheme for
the precise ($\varepsilon$-independent) resolution of the following
Singular Perturbation problem
\begin{gather}
  (P)\,\,\,
  \left\{
    \begin{array}{ll}
      -{1 \over \varepsilon} \nabla_\parallel \cdot 
      \left(A_\parallel \nabla_\parallel \phi^{\varepsilon }\right) 
      - \nabla_\perp \cdot 
      \left(A_\perp \nabla_\perp \phi^{\varepsilon }\right) 
      = f 
      & \text{ in } \Omega, \\[3mm]
      {1 \over \varepsilon} 
      n_\parallel \cdot 
      \left( A_\parallel \nabla_\parallel \phi^{\varepsilon } \right)
      +
      n_\perp \cdot 
      \left(A_\perp \nabla_\perp \phi^{\varepsilon }\right) 
      = 0
      & \text{ on } \partial\Omega _{in} \cup \partial\Omega _{out},  \\[3mm]
      \phi^{\varepsilon }= 0
      & \text{ on } \partial\Omega _D\,,
    \end{array}
  \right.
  \label{eq:J07a} 
\end{gather}
where $n$ is the outward normal to $\Omega $ and the boundaries are defined by 
\begin{gather}
  \partial\Omega _D = \{ x \in \partial\Omega \ | \ b (x) \cdot n = 0 \},\\
  \partial\Omega_{in} = \{ x \in \partial\Omega \ | \ b (x) \cdot n < 0 \},\\
  \partial\Omega_{out} = \{ x \in \partial\Omega \ | \ b (x) \cdot n > 0 \}
  \label{eq:Ju9a}.
\end{gather}
The parameter $0<\eps <1$ can be very small and is responsible for the
high anisotropy of the problem. We shall assume in the rest of this
paper the following hypothesis on the diffusion and source terms\\

\noindent {\bf Hypothesis B} \label{hypo} {\it
  Let $f \in L^2(\Omega)$ and $\overset{\circ}{\partial\Omega _D} \neq \varnothing$.
  Furthermore, the diffusion coefficients $A_{\parallel} \in
  L^{\infty} (\Omega)$ and $A_{\perp} \in \MM_{d \times d} (L^{\infty}
  (\Omega))$ are supposed to satisfy
  \begin{gather}
        0<A_0 \le A_{\parallel}(x) \le A_1\,, \quad  \textrm{f.a.a}\,\,\,x \in \Omega,
        \label{eq:J48a1}
        \\[3mm]
        A_{\perp} (x) b(x)=A_{\perp}^t (x) b(x)=0\,, \quad  \textrm{f.a.a}\,\,\, x \in \Omega,
        \label{eq:J48a2}
        \\[3mm]
        A_0 ||v||^2 \le v^t A_{\perp}(x) v \le A_1 ||v||^2\,,
        \quad \forall v\in \mathbb R^d\,\,\, \text{with} \,\,\, v\cdot
        b(x)=0\,\,\, \text{and} \,\,\,  \textrm{f.a.a}\,\,\, x \in \Omega.
        \label{eq:J48a3}
  \end{gather}
}

\noindent As we conceive to use the finite element method for the
numerical resolution of the P-problem, let us put (\ref{eq:J07a})
under variational form. For this let $\mathcal{V}$ be the Hilbert space
$$
\mathcal{V}:=\{ \phi \in H^1(\Omega)\,\, / \,\, \phi_{| \partial \Omega_D} =0 \}
\,, \quad (\phi,\psi)_{\mathcal{V}} := (\nabla_{\parallel} \phi,\nabla_{\parallel}
\psi)_{L^2} + \eps (\nabla_{\perp} \phi,\nabla_{\perp}
\psi)_{L^2}\,.
$$
We are searching thus for $\phi^\eps \in \mathcal{V}$, solution of
\begin{gather}
  \label{eq:Ja8a} 
  a_\parallel(\phi^\eps,\psi) + \eps
  a_\perp(\phi^\eps,\psi)=\eps (f,\psi)\,, \quad \forall \psi \in \mathcal{V}\,, 
\end{gather} 
where $(\cdot,\cdot)$ stands for the standard $L^2$ scalar product and
the continuous, bilinear forms $a_\parallel : \mathcal{V} \times \mathcal{V}
\rightarrow \RR$ and $a_\perp: \mathcal{V} \times \mathcal{V} \rightarrow \RR$
are given by
\begin{gather} \label{bil}
  \begin{array}{lll}
    \ds a_\parallel (\phi,\psi)&:=&\ds \int_{\Omega} A_{||} \nabla_{||}
    \phi \cdot \nabla_{||}\psi\, dx\,, \quad  a_\perp(\phi,\psi):=\ds
    \int_{\Omega} ( A_{\perp} \nabla_{\perp}
    \phi) \cdot \nabla_{\perp}\psi\, dx\,.
  \end{array}
\end{gather}
Thanks to Hypothesis B and the Lax-Milgram theorem, the problem
(\ref{eq:J07a}) admits a unique solution $\phi^\eps \in \mathcal{V}$ for
all fixed $\eps >0$.
However, the numerical resolution of (\ref{eq:J07a}) is very inadequate for
$\varepsilon \ll 1$. When $\varepsilon$ tends to zero, the problem
reduces to
\begin{gather}
  \left\{
    \begin{array}{ll}
      \displaystyle
      -\nabla_\parallel \cdot 
      \left(A_\parallel \nabla_\parallel \phi\right) 
      = 0 
      & \text{ in } \Omega, \\[3mm]
      \displaystyle
      n_\parallel \cdot 
      \left( A_\parallel \nabla_\parallel \phi \right)
      = 0
      & \text{ on } \partial\Omega _{in} \cup \partial\Omega _{out},  \\[3mm]
      \displaystyle
      \phi^{0 }= 0
      & \text{ on } \partial\Omega _D.
    \end{array}
  \right.
  \label{eq:Jc8a}
\end{gather}
This is an ill-posed problem as it has an infinite number of solutions
$\phi \in \mathcal G$, where
\begin{gather}
  \mathcal G = \{ \phi \in \mathcal V \ | \ \nabla_\parallel \phi =0\}\,,
  \label{eq:Jd8a}
\end{gather}
is the Hilbert space of functions, which are constant along the field
lines of $b$. On the discrete level this is manifested by a very bad
conditioning of the system for small values of $\varepsilon $.
However, as shown in \cite{DDLNN}, 
the solution $\phi ^\varepsilon \in \mathcal V$
converges to $\phi ^0 \in \mathcal G$, a unique solution of
\begin{gather}
  (L)\,\,\,
  \int_\Omega  A_{\perp} \nabla_{\perp} \phi^{0}
  \cdot 
  \nabla_{\perp} \psi \, dx
  =
  \int_\Omega f \psi \, dx
  \;\; , \;\;  \forall \psi \in \mathcal G\,.
  \label{eq:Jv9a}
\end{gather}

\subsection{The Asymptotic Preserving approach (AP-model)}

Let us introduce a so called AP-formulation, which is a
reformulation of the Singular Perturbation problem (\ref{eq:J07a}),
permitting a ``continuous'' transition from the (P)-problem
(\ref{eq:J07a}) to the (L)-problem (\ref{eq:Jv9a}), as $\eps
\rightarrow 0$. The AP-formulation was introduced and is a subject of
more detailed analysis in a separate publication \cite{DLNN}. We will
shortly recall the results of the previous studies. For this, each
function shall be decomposed into two parts: constant part along the
anisotropy direction and a part containing fluctuations. The constant
part converges to the limit solution and the fluctuating to $0$ as $\varepsilon
\rightarrow 0$ (see also \cite{DLNN}).

Let us introduce the following Hilbert space:
\begin{gather}
  \mathcal A : = 
  \{ q \in L^2(\Omega ) \ / \nabla_\parallel q \in L^2(\Omega )
  \text{ and } q|_{\partial \Omega_{in}} =0 \}
  \\
  (q,w)_{\mathcal A} = (\nabla_\parallel q, \nabla_\parallel w) \;\; ,
  \;\;  \forall q,w\in \mathcal A
  \label{eq:Jg8a}.
\end{gather}

Let $\phi^{\eps}$ be a solution to the Singular Perturbation problem
(\ref{eq:J07a}) and set $\phi^{\eps} = p^{\eps} + \eps q^{\eps}$ with
$p^{\eps} \in \GG$ and $q^{\eps} \in \AA$. This decomposition is
unique and we observe
\begin{gather}
  \left\{
    \begin{array}{ll}
      a_\perp (p^{\eps},v) + \eps a_\perp (q^{\eps},v) + a_\parallel
      (q^{\eps},v) = (f,v)  & \forall v\in \VV\,,\\[3mm]
      a_\parallel (p^{\eps},w) = 0 & \forall w\in \AA\,,
    \end{array}
  \right.
  \label{eq:Jo0a}
\end{gather}
or equivalently
\begin{gather}
  (AP)\,\,\,
  \left\{
    \begin{array}{ll}
      a (\phi^{\eps},v) + (1-\eps) a_\parallel (q^{\eps},v) = (f,v)  & \forall v\in \VV\,,\\[3mm]
      a_\parallel (\phi^{\eps},w) = \eps a_\parallel (q^{\eps},w) & \forall w\in \AA\,,
    \end{array}
  \right.
  \label{eq:AP2}
\end{gather}
with the bilinear form $a(v,w)$ defined as
\begin{gather}
  a(v,w) = \int_\Omega \mathbb A\nabla v \cdot \nabla w
  \label{eq:Jbab}.
\end{gather}
The matrix $\mathbb A$ is given by
\begin{gather}
  \mathbb A =  A_\parallel b \otimes b + (Id - b \otimes b)A_\perp (Id - b \otimes b)
  \label{eq:Jcab},
\end{gather}
and is $\eps$ independent, $\mathbb A = \mathbb A_1$. 

The above formulation is the Asymptotic Preserving
reformulation based on the Micro Macro decomposition.

\subsection{The stabilized Asymptotic Preserving approach (AP-model)}

The Asymptotic Preserving approach presented above has some
limitations originating in the choice of the vector space $\AA$. Note
that in the previous paper the uniqueness of $q^\varepsilon $ was ensured
by setting $q^\varepsilon $ to $0$ on the $\Gamma _{in}$ boundary
under hypothesis that every field line of $b$ has its beginning on
$\Gamma _{in}$ and an end on $\Gamma _{out}$. In other words, more
complex geometries, like for example closed field lines are not
permitted. In this paper we propose a new way of providing the uniqueness
of $q^\varepsilon $ which overcomes the limitations of our previous
method. The idea is based on the penalty stabilization method
introduced in \cite{BrezziDouglas88} for the Stokes problem.

Let us propose a new Asymptotic Preserving method:
find $(\phi ^\varepsilon ,q^\varepsilon )\in \VV \times \VV$ such that
\begin{gather}
  (APS)\,\,\,
  \left\{
    \begin{array}{ll}
      a (\phi^{\eps},v) + (1-\eps) a_\parallel (q^{\eps},v) = (f,v)  & \forall v\in \VV\,,\\[3mm]
      a_\parallel (\phi^{\eps},w) = \eps a_\parallel (q^{\eps},w) + 
      \sum_{K\in\tau_h } h_K^2 \int_K \mathbb A \nabla q^\varepsilon \cdot \nabla w & \forall w\in \VV\,,
    \end{array}
  \right.
  \label{eq:AP2S}
\end{gather}
where $h_K$ denotes the size of the element $K$. Note that now, instead
of seeking $q^\varepsilon \in \AA$ we are looking for $q^\varepsilon
\in \VV$. Existence and uniqueness of the above problem can be easily
proved by the Lax-Milgram theorem.



\section{Numerical method}\label{sec:num_met}
This section concerns the discretization of the Asymptotic Preserving
formulation (\ref{eq:AP2S}), based on a finite element method. The
anisotropic error indicator is introduced and the obtained numerical
results are studied.

Let us denote by $\mathcal{V}_{h} \subset \mathcal{V}$ and $\mathcal{A}_{h}
\subset \mathcal{A}$ the finite dimensional approximation spaces,
constructed by means of $P_1$ finite elements. We are thus looking for
a discrete solution $(\phi^\varepsilon_h,\;q^\varepsilon_h) \in
\VV_h\times \AA_h$ of the following system
\begin{gather}
  (APS)_h\,\,\,
  \left\{
    \begin{array}{ll}
      a (\phi^{\eps}_h,v_h) + (1-\eps) a_\parallel (q^{\eps}_h,v_h) =
      (f_h,v_h)  & \forall v_h\in \VV_h\,,\\[3mm] 
      a_\parallel (\phi^{\eps}_h,w_h) = \eps a_\parallel (q_h^{\eps},w_h) + 
      \sum_{K\in\tau_h } h_K^2 \int_K \mathbb A \nabla q^\varepsilon_h \cdot \nabla w_h  & \forall w_h\in \VV_h\,.
    \end{array}
  \right.
  \label{eq:AP2disc}
\end{gather}

\subsection{Adaptive finite elements with large aspect ratio}

We now propose an adaptive finite element algorithm. The goal is to
build successive triangulations with large aspect ratio such that the
relative estimated error of the function $\phi^{\eps} =p^{\eps}+\eps
q^{\eps}$ in the $H^1(\Omega)$ norm is close to a preset tolerance
$TOL$. For this purpose, we introduce an error indicator which
requires some further notations. This error indicator measures the
error of the numerical solution $\phi^{\eps}$ in the directions of
maximum and minimum stretching of the triangle. The goal of the
adaptive algorithm is then to equidistribute the error indicator in
the directions of maximum and minimum stretching, and to align the
directions of maximum and minimum stretching with the directions of
maximum and minimum error. We refer to
\cite{picasso-03a,picasso-03,burman-picasso-03,FormaggiaPerotto01,FormaggiaPerotto03}
for theoretical justifications.

\begin{figure}[htbp]

  \vspace{5ex}

  \begin{center}
    \includegraphics[height=2cm]{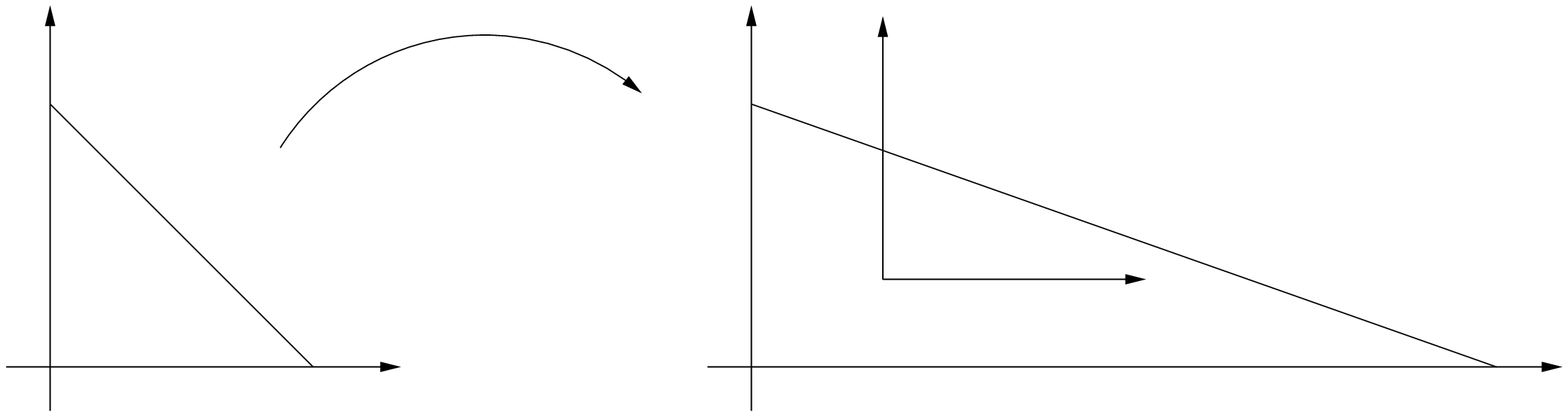}
  \end{center}
  \begin{picture}(0,0)(-75,5)
    
    \put(60,15){$\hat x_1$}
    \put(0,80){$\hat x_2$}
    \put(55,75){$T_K$}
    \put(225,20){$x_1$}
    \put(100,80){$x_2$}
    \put(40,10){$1$}
    \put(0,55){$1$}
    \put(200,10){$H$}
    \put(97,55){$h$}
    \put(160,35){$\mathbf r_{1,K}$}
    \put(125,70){$\mathbf r_{2,K}$}

  \end{picture}
  \caption{A simple example of transformation from reference triangle
    $\hat K$ to generic triangle $K$.}
  \label{fig_aniso}
\end{figure}

For any triangle $K$ of the mesh, let $T_K: \hat K\to K$ be the affine
transformation which maps the reference triangle $\hat K$ into
$K$. Let $M_K$ be the Jacobian of $T_K$ that is
\begin{gather}
  \mathbf x=T_K(\hat {\mathbf x})=M_K \hat {\mathbf x} + \mathbf t_K.
  \nonumber
\end{gather}
Since $M_K$ is invertible, it admits a singular value decomposition
$M_K=R_K^T\Lambda_K P_K$, where $R_K$ and $P_K$ are orthogonal and
where $\Lambda_K$ is diagonal with positive entries. In the following
we set
\begin{gather}
  \Lambda_K=\begin{pmatrix}\lambda_{1,K} & 0\nonumber\\ 0 & \lambda_{2,K}
  \end{pmatrix}
  \qquad\text{and}\qquad
  R_K=\begin{pmatrix}\mathbf r_{1,K}^T \nonumber\\ \mathbf r_{2,K}^T\end{pmatrix},
  \label{eq:lr}
\end{gather}
with the choice $\lambda_{1,K}\ge\lambda_{2,K}$. A simple example of
such a transformation is $x_1=H \hat x_1$, $x_2=h \hat x_2$, with
$H\ge h$, thus
\begin{gather}
  M_K=
  \begin{pmatrix} 
    H & 0 \nonumber\\ 
    0 & h 
  \end{pmatrix}
  \quad \lambda_{1,K}=H,
  \quad \lambda_{2,K}=h,
  \quad \mathbf r_{1,K}=\begin{pmatrix}1\nonumber\\0\end{pmatrix},
  \quad \mathbf r_{2,K}=\begin{pmatrix}0\nonumber\\1\end{pmatrix},
  \nonumber
\end{gather}
see Figure \ref{fig_aniso}. In other words $\mathbf r_{1,K}$ and
$\mathbf r_{2,K}$ are the directions of maximum and minimum
stretching, while $\lambda_{1,K}$ and $\lambda_{2,K}$ measure the
amplitude of stretching.

Let $I_h : H^{1}_0 (\Omega) \rightarrow \VV_h$ be a Cl\'ement or
Scott-Zhang like interpolation operator. We now recall some
interpolation results due to
\cite{FormaggiaPerotto01,FormaggiaPerotto03,MichelettiPerottoPicasso03}.

\begin{proposition}\label{prop:est}
  There is a constant $C = C (\hat{K})$ such that for all $v \in H^{1}
  (\Omega)$, for all $K \in \tau_h$, for all edges $e$ of $K$, we have
  \begin{gather}
    ||v-I_hv||_{L^{2} (\Omega)} \leq
    C
    \left(
      \lambda_{1,K}^{2}(\mathbf r_{1,K} G_K (v)\mathbf r_{1,K}) +
      \lambda_{2,K}^{2}(\mathbf r_{2,K} G_K (v)\mathbf r_{2,K})
    \right)^{1/2}
    \label{eq:Jq0a1}, \\
    ||v-I_hv||_{L^{2} (e)} \leq
    Ch_K^{1/2}
    \left(
      \frac{\lambda_{1,K}}{\lambda_{2,K}}(\mathbf r_{1,K} G_K (v)\mathbf r_{1,K}) +
      \frac{\lambda_{2,K}}{\lambda_{1,K}}(\mathbf r_{2,K} G_K (v)\mathbf r_{2,K})
    \right)^{1/2}
    \label{eq:Jq0a2}, \\
    ||\nabla(v-I_hv)||_{L^{2} (K)} \leq
    C
    \left(
      \frac{\lambda_{1,K}^{2}}{\lambda_{2,K}^{2}}(\mathbf r_{1,K} G_K (v)\mathbf r_{1,K}) +
      (\mathbf r_{2,K} G_K (v)\mathbf r_{2,K})
    \right)^{1/2}
    \label{eq:Jq0a3}.
  \end{gather}
  Here $h_k = \text{diam }K$, $\lambda_{i,K}$ and $\mathbf r_{i,K}$
  are given by (\ref{eq:lr}), and $G_K (v)$ denotes the $2\times 2$
  matrix defined as
  \begin{gather}
    G_K(v) =
    \begin{pmatrix}
      \displaystyle{\int_K 
        \left(\frac{\partial v}{\partial x_1}\right)^2 dx}
      & 
      \displaystyle{\int_K 
        \left(\frac{\partial v}{\partial x_1}\right)
        \left(\frac{\partial v}{\partial x_2}\right) dx}
      \\ 
      \displaystyle{\int_K 
        \left(\frac{\partial v}{\partial x_1}\right)
        \left(\frac{\partial v}{\partial x_2}\right) dx}
      & \displaystyle{\int_K 
        \left(\frac{\partial v}{\partial x_2}\right)^2 dx}
    \end{pmatrix}.
  \end{gather}
\end{proposition}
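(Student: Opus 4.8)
The plan is to prove these three anisotropic interpolation estimates by transporting everything to the reference triangle $\hat K$, applying a standard (isotropic) interpolation estimate there, and then pushing the bound back to $K$ while tracking carefully how the singular values $\lambda_{i,K}$ and the directions $\mathbf r_{i,K}$ enter through the affine map $T_K$. Concretely, for $v$ defined on $K$ I would set $\hat v = v\circ T_K$ on $\hat K$, so that $\hat v - I_{\hat K}\hat v = (v - I_h v)\circ T_K$ once the interpolation operator is chosen to commute with $T_K$ (this is exactly the affine-invariance property enjoyed by Cl\'ement/Scott-Zhang operators). On the reference element one has the classical estimate $\|\hat v - I_{\hat K}\hat v\|_{L^2(\hat K)} \le C(\hat K)\,|\hat v|_{H^1(\hat K)}$, and analogous trace and gradient versions, with a constant depending only on the fixed reference triangle. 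The whole game is then to convert the reference-frame seminorm $|\hat v|_{H^1(\hat K)}$ and the measures of $\hat v$ back into the frame-dependent quantities $\lambda_{i,K}$, $\mathbf r_{i,K}$, and $G_K(v)$ on $K$.

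The key computation is the change of variables. Using $\hat{\mathbf x} = M_K^{-1}(\mathbf x - \mathbf t_K)$ one gets $\hat\nabla \hat v = M_K^T \nabla v$, and with $M_K = R_K^T \Lambda_K P_K$ the gradient transforms as $\hat\nabla \hat v = P_K^T \Lambda_K R_K \nabla v$. Substituting into $|\hat v|_{H^1(\hat K)}^2 = \int_{\hat K} |\hat\nabla \hat v|^2\, d\hat{\mathbf x}$ and using $d\mathbf x = \det M_K\, d\hat{\mathbf x} = \lambda_{1,K}\lambda_{2,K}\, d\hat{\mathbf x}$, the orthogonal factor $P_K^T$ drops out under the Euclidean norm, leaving a diagonal quadratic form in $\Lambda_K R_K \nabla v$. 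Writing this out component by component produces exactly the combination
\begin{gather}
  |\hat v|_{H^1(\hat K)}^2 = \frac{1}{\lambda_{1,K}\lambda_{2,K}}
  \left(
    \lambda_{1,K}^2\, \mathbf r_{1,K}^T G_K(v)\,\mathbf r_{1,K}
    + \lambda_{2,K}^2\, \mathbf r_{2,K}^T G_K(v)\,\mathbf r_{2,K}
  \right),
  \nonumber
\end{gather}
since the rows of $R_K$ are precisely $\mathbf r_{1,K}^T$ and $\mathbf r_{2,K}^T$ and $G_K(v)$ collects the $L^2(K)$ inner products of the partial derivatives. For (\ref{eq:Jq0a1}) one then combines this with $\|v - I_h v\|_{L^2(K)}^2 = \lambda_{1,K}\lambda_{2,K}\,\|\hat v - I_{\hat K}\hat v\|_{L^2(\hat K)}^2$; the Jacobian factors cancel and the stated estimate drops out after taking square roots.

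For the trace estimate (\ref{eq:Jq0a2}) I would do the same reduction on an edge, using that a reference edge has length of order one while the physical edge $e$ carries a length factor, and the trace scaling on $\hat K$ together with $h_K \sim \lambda_{1,K}$; the asymmetric ratios $\lambda_{1,K}/\lambda_{2,K}$ arise because the area Jacobian $\lambda_{1,K}\lambda_{2,K}$ only partially cancels against the edge-length and $h_K$ factors. For the gradient estimate (\ref{eq:Jq0a3}) one writes $\|\nabla(v - I_h v)\|_{L^2(K)}^2$ in terms of $\hat\nabla(\hat v - I_{\hat K}\hat v)$ via $\nabla(v-I_hv) = M_K^{-T}\hat\nabla(\hat v - I_{\hat K}\hat v) = R_K^T \Lambda_K^{-1} P_K \hat\nabla(\hat v - I_{\hat K}\hat v)$, which introduces the inverse singular values and hence the ratio $\lambda_{1,K}^2/\lambda_{2,K}^2$ in the first term. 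I expect the main obstacle to be bookkeeping rather than a deep difficulty: one must keep the orthogonal matrices $R_K$, $P_K$ in the right places so that the Euclidean norms collapse cleanly, and must correctly pair each power of $\lambda_{i,K}$ coming from the metric distortion against the powers coming from the Jacobian and the edge scaling, so that $G_K(v)$ emerges with exactly the coefficients displayed. Since these estimates are quoted from \cite{FormaggiaPerotto01,FormaggiaPerotto03,MichelettiPerottoPicasso03}, it is legitimate to present the argument at the level of this scaling reduction and refer to those works for the reference-element estimates and the precise form of the admissibility hypotheses on $I_h$.
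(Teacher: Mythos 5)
The paper does not prove this proposition at all: it simply cites Proposition 3.1 of Formaggia--Perotto (2001), Proposition 2.2 of Formaggia--Perotto (2003), and Proposition 2.5 of Micheletti--Perotto--Picasso (2003) for the three estimates respectively. You instead reconstruct the underlying argument, and your scaling computation is the right one: $\hat\nabla\hat v=M_K^T\nabla v=P_K^T\Lambda_K R_K\nabla v$, the orthogonal factor $P_K^T$ disappears under the Euclidean norm, and pairing the powers of $\lambda_{i,K}$ from the metric distortion against the Jacobian $\lambda_{1,K}\lambda_{2,K}$ (and against $|e|\sim h_K\sim\lambda_{1,K}$ for the trace bound, and against $\Lambda_K^{-1}$ for the gradient bound) reproduces exactly the three displayed right-hand sides. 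This is more informative than the paper's citation, and it is essentially how the cited works proceed. One point in your sketch is glossed over and would fail if taken literally: the Cl\'ement/Scott--Zhang interpolant is \emph{not} affine-invariant element by element, because its value on $K$ depends on $v$ over the whole patch $\Delta_K$ of elements touching $K$. Consequently the reference-element estimate must be stated on the mapped patch $T_K^{-1}(\Delta_K)$, the constant $C(\hat K)$ is only uniform under the additional hypothesis that these reference patches have uniformly bounded diameter (i.e.\ the stretching varies smoothly between neighbouring triangles), and in the cited references the matrix $G_K(v)$ is accordingly assembled from integrals over $\Delta_K$ rather than over $K$ alone, a discrepancy the paper's own statement already carries. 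Since you explicitly defer the reference-element estimates and the admissibility hypotheses on $I_h$ to the cited works, this is a presentational gap rather than a mathematical error, but the commutation claim ``$\hat v-I_{\hat K}\hat v=(v-I_hv)\circ T_K$'' should be weakened to a statement about patches for the argument to be honest.
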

\begin{proof}
  The first estimate is in Proposition 3.1 of \cite{FormaggiaPerotto01}, the second
  estimate is in Proposition 2.2 of \cite{FormaggiaPerotto03}, the third estimate is in
  Proposition 2.5 of \cite{MichelettiPerottoPicasso03}. 
\end{proof}

The results of Proposition \ref{prop:est} are now used to derive an
anisotropic error indicator for the Asymptotic Preserving
reformulation. The error is first related to the equation
residual. The Cl\'ement interpolant is introduced. Then the
anisotropic interpolation results are used. Finally, a Zienkiewicz-Zhu
error estimator is used to approach the error gradient.

Let $e = \phi^{\eps} - \phi^{\eps}_h$ and $e_q = q^{\eps} -
q^{\eps}_h$. The following error estimate for the Asymptotic
Preserving reformulation (\ref{eq:AP2}) holds.

\begin{proposition}\label{prop:ee}
  There exist a constant $C$ depending only on the interpolation
  constants from Proposition \ref{prop:est} and not on the mesh size
  nor aspect ratio such that
  \begin{multline}
    \int_\Omega \mathbb A \nabla e \cdot \nabla e
    + (1-\varepsilon ) \varepsilon 
    \int_\Omega A_\parallel \nabla_\parallel e_q \cdot \nabla_\parallel e_q
    + (1-\eps)
    \sum_{K\in\tau_h } h_K^2 \int_K \mathbb A \nabla e_q \cdot\nabla e_q 
    \leq 
    \\
    \
    C \sum_{K \in \tau_h}
    \Bigg(
      ||f+\nabla \cdot (\mathbb A \nabla \phi^{\eps}_h) + 
      (1-\eps) \nabla_\parallel \cdot (A_\parallel \nabla_\parallel
      q^{\eps}_h )||_{L^{2}(K)}
      \\
      + \frac{1}{2\lambda_{2,K}^{1/2}} ||[\mathbb A\nabla \phi^{\eps}_h \cdot
      n] ||_{L^{2}(\partial K)}
      + \frac{1-\varepsilon }{2\lambda_{2,K}^{1/2}} ||[A_\parallel\nabla_\parallel
      q^{\eps}_h \cdot n] ||_{L^{2}(\partial K)}
    \Bigg) \\
    \times
    \left(
    \lambda_{1,K}^{2}(\mathbf r_{1,K} G_K (e)\mathbf r_{1,K}) +
    \lambda_{2,K}^{2}(\mathbf r_{2,K} G_K (e)\mathbf r_{2,K})
    \right)^{1/2}
    \\
    +
    (1-\eps )
    \Bigg(
    ||\nabla_\parallel \cdot (A_\parallel \nabla_\parallel
    (\phi^\eps_h-\eps q^\eps_h) )||_{L^{2}(K)}
    +
    \frac{1}{2\lambda_{2,K}^{1/2}} ||[A_\parallel\nabla_\parallel
      q^{\eps}_h \cdot n] ||_{L^{2}(\partial K)}
    \\
    +\lambda_{2,K}^2 || \nabla \cdot (\mathbb A \nabla q^\eps_h ) ||_{L^{2}(K)}
    + \lambda_{2,K}^{3/2}
    || \mathbb A \nabla q^\eps_h  \cdot n||_{L^{2}(\partial K)}    
    \Bigg)
    \\
    \times
    \left(
    \lambda_{1,K}^{2}(\mathbf r_{1,K} G_K (e_q)\mathbf r_{1,K}) +
    \lambda_{2,K}^{2}(\mathbf r_{2,K} G_K (e_q)\mathbf r_{2,K})
    \right)^{1/2}
    \label{eq:Js0a}.
  \end{multline}
  Here $[\cdot]$ denotes the jump of the bracketed quantity across an
  internal edge, $[\cdot]=0$ for an edge on the boundary
  $\partial\Omega_D$, $[\cdot]$ is set to twice the imposed flux on
  the $\partial\Omega_{in} \cup \partial\Omega_{out}$ and $n$ is the
  unit edge normal in arbitrary direction.
\end{proposition}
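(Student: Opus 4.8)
The plan is to run a residual-based a posteriori argument tuned to the anisotropic interpolation bounds of Proposition~\ref{prop:est}. First I would rewrite the left-hand side of (\ref{eq:Js0a}) as a single bilinear form evaluated on the diagonal. Abbreviating the stabilization by $s(q,w):=\sum_{K\in\tau_h}h_K^2\int_K \mathbb A\nabla q\cdot\nabla w$ and introducing
\begin{gather*}
  \mathcal B\big((\phi,q),(v,w)\big):=a(\phi,v)+(1-\eps)a_\parallel(q,v)-(1-\eps)\big(a_\parallel(\phi,w)-\eps a_\parallel(q,w)-s(q,w)\big),
\end{gather*}
the symmetry of $a_\parallel$ makes the cross terms $a_\parallel(q,v)$ and $a_\parallel(\phi,w)$ cancel on the diagonal, so that $\mathcal B\big((e,e_q),(e,e_q)\big)=a(e,e)+(1-\eps)\eps\,a_\parallel(e_q,e_q)+(1-\eps)s(e_q,e_q)$ is precisely the left-hand side of (\ref{eq:Js0a}). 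The task thus reduces to bounding $\mathcal B\big((e,e_q),(e,e_q)\big)$.

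Next I would derive a residual representation. Since $(\phi^\eps,q^\eps)$ solves the continuous stabilized problem (\ref{eq:AP2S}) and $(\phi^\eps_h,q^\eps_h)$ its conforming $P_1$ Galerkin discretization (\ref{eq:AP2disc}) on the same mesh, both sitting in $\VV\times\VV$, subtraction gives $\mathcal B\big((e,e_q),(v,w)\big)=R_1(v)+R_2(w)$ with
\begin{align*}
  R_1(v)&=(f,v)-a(\phi^\eps_h,v)-(1-\eps)a_\parallel(q^\eps_h,v),\\
  R_2(w)&=(1-\eps)\big(a_\parallel(\phi^\eps_h,w)-\eps a_\parallel(q^\eps_h,w)-s(q^\eps_h,w)\big).
\end{align*}
The two lines of (\ref{eq:AP2disc}) give the Galerkin orthogonalities $R_1(v_h)=0$ and $R_2(w_h)=0$ for all $v_h,w_h\in\VV_h$ (up to the data oscillation $f-f_h$, handled in the usual way). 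Choosing $(v,w)=(e,e_q)$ and subtracting a Cl\'ement/Scott--Zhang interpolant $I_h$, I may therefore replace $e$ by $e-I_he$ in $R_1$ and $e_q$ by $e_q-I_he_q$ in $R_2$.

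I would then integrate by parts element by element, using $\nabla\cdot B=0$ to treat the parallel terms. The volume parts deliver the interior residuals $f+\nabla\cdot(\mathbb A\nabla\phi^\eps_h)+(1-\eps)\nabla_\parallel\cdot(A_\parallel\nabla_\parallel q^\eps_h)$ in $R_1$, and $\nabla_\parallel\cdot(A_\parallel\nabla_\parallel(\phi^\eps_h-\eps q^\eps_h))$ together with the stabilization residual $\nabla\cdot(\mathbb A\nabla q^\eps_h)$ in $R_2$; the boundary parts assemble into the edge jumps $[\mathbb A\nabla\phi^\eps_h\cdot n]$, $[A_\parallel\nabla_\parallel q^\eps_h\cdot n]$ and $\mathbb A\nabla q^\eps_h\cdot n$, with the jump conventions on $\partial\Omega_D$ and $\partial\Omega_{in}\cup\partial\Omega_{out}$ exactly as stated after (\ref{eq:Js0a}). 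A Cauchy--Schwarz inequality on each element and each edge, followed by the volume bound (\ref{eq:Jq0a1}) for the $L^2(K)$ factors and the edge bound (\ref{eq:Jq0a2}) for the $L^2(e)$ factors --- applied with $v=e$ in the $R_1$ terms and $v=e_q$ in the $R_2$ terms --- reproduces the factors $\big(\lambda_{1,K}^2(\mathbf r_{1,K}G_K\mathbf r_{1,K})+\lambda_{2,K}^2(\mathbf r_{2,K}G_K\mathbf r_{2,K})\big)^{1/2}$ carrying $G_K(e)$ and $G_K(e_q)$ respectively.

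The only genuine obstacle is the anisotropic weight bookkeeping. With $h_K\simeq\lambda_{1,K}$, the edge bound (\ref{eq:Jq0a2}) rewrites as $h_K^{1/2}\big(\tfrac{\lambda_{1,K}}{\lambda_{2,K}}(\cdots)+\tfrac{\lambda_{2,K}}{\lambda_{1,K}}(\cdots)\big)^{1/2}=\lambda_{2,K}^{-1/2}\big(\lambda_{1,K}^2(\cdots)+\lambda_{2,K}^2(\cdots)\big)^{1/2}$, which is the source of the $\tfrac12\lambda_{2,K}^{-1/2}$ prefactors on the jump terms, the $\tfrac12$ splitting each interior jump between the two adjacent triangles. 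For the stabilization one must read the mesh parameter in $s(\cdot,\cdot)$ as the minimal element size $\lambda_{2,K}$, so that its weight contributes $\lambda_{2,K}^2$ to the volume residual $\nabla\cdot(\mathbb A\nabla q^\eps_h)$ and, combined with the $\lambda_{2,K}^{-1/2}$ of the edge bound, $\lambda_{2,K}^{3/2}$ to the flux $\mathbb A\nabla q^\eps_h\cdot n$; keeping these powers consistent is where the care is needed. Summing over $K\in\tau_h$ and using a discrete Cauchy--Schwarz inequality then yields (\ref{eq:Js0a}), with $C$ depending only on the interpolation constants of Proposition~\ref{prop:est}.
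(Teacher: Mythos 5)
Your argument follows the paper's own proof essentially step for step: the energy identity obtained from the two equations of the stabilized AP system with the cross terms $a_\parallel(q,v)$ and $a_\parallel(\phi,w)$ cancelling on the diagonal, Galerkin orthogonality combined with a Cl\'ement interpolant, element-wise integration by parts producing the interior residuals and edge jumps, Cauchy--Schwarz with the anisotropic interpolation estimates (\ref{eq:Jq0a1})--(\ref{eq:Jq0a2}), and the final conversion of the weights via $h_K\simeq\lambda_{1,K}$; packaging this as a single bilinear form $\mathcal B$ is only a cosmetic reorganization. Your observation that the stabilization parameter must be read as the minimal element size $\lambda_{2,K}$ in order to produce the powers $\lambda_{2,K}^{2}$ and $\lambda_{2,K}^{3/2}$ in (\ref{eq:Js0a}) is well spotted --- that is precisely the point where the paper's passage from its intermediate estimate (\ref{eq:J90a}) to the stated result is loosest.
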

\begin{proof}
  Setting $v = e$ in the AP reformulation (\ref{eq:AP2}) yields
  \begin{gather}
    a(e,e) + (1-\varepsilon ) a_\parallel(e, e_q) =
    (f,e) - a(\phi^{\eps}_h , e) - (1-\varepsilon ) a_\parallel (q^\eps_h , e)
    \label{eq:J40a}.
  \end{gather}
  Now, since $a_\parallel (\phi^\eps-\eps q^\eps ,e_q) =
  \sum_{K\in\tau_h } h_K^2 \int_K \mathbb A \nabla q^\eps \cdot \nabla e_q $ 
  we obtain
  \begin{gather}
    a_\parallel(e, e_q) = \eps a_\parallel(e_q,e_q) +
    \sum_{K\in\tau_h } h_K^2 \int_K \mathbb A \nabla q^\eps \cdot
    \nabla e_q 
    - a_\parallel(\phi ^\eps_h -\eps q^\eps _h, e_q)
    \label{eq:Jkdb}
  \end{gather}
  and hence
  \begin{multline}
    \int_\Omega \mathbb A \nabla e \cdot \nabla e 
    + (1-\varepsilon ) \varepsilon \int_\Omega A_\parallel
    \nabla_\parallel e_q \cdot \nabla_\parallel e_q 
    +\sum_{K\in\tau_h } h_K^2 \int_K \mathbb A \nabla e_q \cdot\nabla e_q 
    =
    \\
    \int_\Omega fe - \int_\Omega \mathbb A \nabla\phi^{\eps}_h \cdot \nabla e
    \ - (1-\varepsilon )\! \int_\Omega A_\parallel  \nabla_\parallel q_h
    \cdot \nabla_\parallel e
    \ + (1-\varepsilon )\! \int_\Omega A_\parallel  \nabla_\parallel (\phi^{\eps}_h-\eps q^\eps_h)
    \cdot \nabla_\parallel e_q
    \\ - (1-\varepsilon) \sum_{K\in\tau_h } h_K^2 \int_K \mathbb A \nabla q^\eps_h \cdot\nabla e_q 
    \label{eq:J50a}.
  \end{multline}
  For any $v\in V$ we have 
  \begin{multline}
    (f,v) - a(\phi^{\eps}_h, v)
    - (1-\eps ) a_\parallel (q^\eps_h ,  v)
    \\
    \qquad =(f,v-I_hv) - a(\phi^{\eps}_h,v-I_hv)
    - (1-\eps ) a_\parallel (q^\eps_h ,v-I_hv)
    \hfill
    \\
    \qquad=\sum_{K \in \tau_h}
    \Bigg(
    \int_K (f+\nabla \cdot (\mathbb A \nabla \phi^{\eps}_h) + 
      (1-\eps) \nabla_\parallel \cdot (A_\parallel \nabla_\parallel
      q^{\eps}_h )) (v - I_h v) \hfill
      \\
      \qquad\qquad
      + \frac{1}{2} \int_{\partial K} [\mathbb A\nabla \phi^{\eps}_h \cdot n](v - I_h v)
      + \frac{1-\eps}{2} \int_{\partial K} [A_\parallel\nabla_\parallel q^{\eps}_h \cdot n](v - I_h v)
    \Bigg)
    \hfill
    \label{eq:J70a}.
  \end{multline}
  Furthermore, for any $w\in A$ the following holds true :
  \begin{multline}
    a_\parallel (\phi^\eps_h -\eps q^\eps_h, w)
    - \sum_{K\in\tau_h } h_K^2 \int_K \mathbb A \nabla q^\eps_h \cdot\nabla w 
    \\
    \qquad
    = 
    a_\parallel (\phi^\eps_h -\eps q^\eps_h, w - I_h w) 
    - \sum_{K\in\tau_h } h_K^2 \int_K \mathbb A \nabla q^\eps_h \cdot\nabla (w - I_h w) 
    \\
    \qquad
    =
    \sum_{K \in \tau_h}
    \Bigg(
    \int_K \nabla_\parallel \cdot (A_\parallel \nabla_\parallel (\phi^{\eps}_h-\eps q^\eps_h)) 
    (w - I_h v) \hfill
    + \frac{1}{2} \int_{\partial K} [A_\parallel\nabla_\parallel
    (\phi^{\eps}_h-\eps q^\eps_h) \cdot n](w - I_h w) \hfill
    \\- h_K^2 \int_K \nabla \cdot (\mathbb A \nabla q^\eps_h ) (w - I_h    w) 
    + h_K^2 \int_{\partial K} (\mathbb A \nabla q^\eps_h \cdot n) (w - I_h w) 
    \Bigg)
    \label{eq:J80a} .
  \end{multline}
  Now, choosing $v =e$, $w = e_q$ and using the Cauchy-Schwartz
  inequality together with the interpolation results of the
  Proposition \ref{prop:est} the following is obtained:
  \begin{multline}
    \int_\Omega \mathbb A \nabla e \cdot \nabla e
    + (1-\varepsilon ) \varepsilon 
    \int_\Omega A_\parallel \nabla_\parallel e_q \cdot \nabla_\parallel e_q
    \\ +
    (1-\varepsilon ) \sum_{K\in\tau_h } h_K^2 \int_K \mathbb A \nabla e_q \cdot\nabla e_q 
    \leq 
    C \sum_{K \in \tau_h}
    \Bigg(
      ||f+\nabla \cdot (\mathbb A \nabla \phi^{\eps}_h) + 
      (1-\eps) \nabla_\parallel \cdot (A_\parallel \nabla_\parallel
      q^{\eps}_h )||_{L^{2}(K)}
      \\
      + \frac{1}{2}\left(\frac{h_K}{\lambda_{1,K}\lambda_{2,K}}\right)^{1/2} 
      ||[\mathbb A\nabla \phi^{\eps}_h \cdot n] ||_{L^{2}(\partial K)}
      + \frac{1-\varepsilon }{2}\left(\frac{h_K}{\lambda_{1,K}\lambda_{2,K}}\right)^{1/2} 
      ||[A_\parallel\nabla_\parallel q^{\eps}_h \cdot n] ||_{L^{2}(\partial K)}
    \Bigg) \\
    \times
    \left(
      \lambda_{1,K}^{2}(\mathbf r_{1,K} G_K (e)\mathbf r_{1,K}) +
      \lambda_{2,K}^{2}(\mathbf r_{2,K} G_K (e)\mathbf r_{2,K})
    \right)^{1/2}
    \\
    +
    (1-\varepsilon )
    \Bigg(
      ||\nabla_\parallel \cdot (A_\parallel \nabla_\parallel (\phi^{\eps}_h-\eps q^\eps_h))||_{L^{2}(K)}
      + \frac{1}{2}\left(\frac{h_K}{\lambda_{1,K}\lambda_{2,K}}\right)^{1/2} \!\!
      ||[A_\parallel\nabla_\parallel (\phi^{\eps}_h-\eps q^\eps_h) \cdot n] ||_{L^{2}(\partial K)}
      \\
      + h_K^2       || \nabla \cdot (\mathbb A \nabla q^\eps_h ) ||_{L^{2}(K)}
      + \left(\frac{h^5_K}{\lambda_{1,K}\lambda_{2,K}}\right)^{1/2} 
      || \mathbb A \nabla q^\eps_h  \cdot n||_{L^{2}(\partial K)}
    \Bigg) 
    \\
    \times
    \left(
      \lambda_{1,K}^{2}(\mathbf r_{1,K} G_{K} (e_q)\mathbf r_{1,K}) +
      \lambda_{2,K}^{2}(\mathbf r_{2,K} G_{K} (e_q)\mathbf r_{2,K})
    \right)^{1/2}
    \label{eq:J90a}
  \end{multline}
  where $C = C (\hat{K})$.
  Since $\int_\Omega A_\parallel \nabla_\parallel e_q \cdot
  \nabla_\parallel e_q \geq 0$ and 
  \begin{gather}
    \lambda_{1,K} h_{\hat{K}} \leq h_K \leq \lambda_{2,K} h_{\hat{K}} 
    \label{eq:J00a},
  \end{gather}
  the inequality (\ref{eq:Js0a}) holds true.
\end{proof}

\begin{remark}
  Note that the above result does not contain any terms inversely
  proportional to $\varepsilon $ as it involves matrix $\mathbb A$ rather than
  $\mathbb A_\varepsilon$. The standard anisotropic error indicator for an
  anisotropic diffusion problem studied in \cite{picasso-03,picasso-06} takes form:
  \begin{multline}
    \int_\Omega \mathbb A_\varepsilon \nabla e \cdot \nabla e
    \leq 
    C \sum_{K \in \tau_h}
    \Bigg(
      ||f+\nabla \cdot (\mathbb A_\varepsilon \nabla \phi^{\eps}_h)
      + \frac{1}{2\lambda_{2,K}^{1/2}} ||[\mathbb A_\varepsilon \nabla \phi^{\eps}_h \cdot
      n] ||_{L^{2}(\partial K)}
    \Bigg) 
    \\
    \times
    \left(
      \lambda_{1,K}^{2}(\mathbf r_{1,K} G_K (e)\mathbf r_{1,K}) +
      \lambda_{2,K}^{2}(\mathbf r_{2,K} G_K (e)\mathbf r_{2,K})
    \right)^{1/2} ,
    \label{eq:Jaab}
  \end{multline}
  thus it involves terms of the order $\frac{1}{\varepsilon}$. While
  this error indicator remains valid it is of no practical use for
  small values of $\varepsilon $. Indeed, the remeshing algorithm
  which aims in keeping the error indicator close to a given value
  would yield meshes with mesh size proportional to $\varepsilon $.
\end{remark}
\begin{remark}
  In the case of $\varepsilon = 1$ the above error indicator reduces
  to the standard anisotropic error indicator for a diffusion problem
  studied in \cite{} :
  \begin{multline}
    \int_\Omega \mathbb A \nabla e \cdot \nabla e
    \leq 
    C \sum_{K \in \tau_h}
    \Bigg(
      ||f+\nabla \cdot (\mathbb A \nabla \phi^{\eps}_h)
      + \frac{1}{2\lambda_{2,K}^{1/2}} ||[\mathbb A\nabla \phi^{\eps}_h \cdot
      n] ||_{L^{2}(\partial K)}
    \Bigg)
    \\
    \times
    \left(
      \lambda_{1,K}^{2}(\mathbf r_{1,K} G_K (e)\mathbf r_{1,K}) +
      \lambda_{2,K}^{2}(\mathbf r_{2,K} G_K (e)\mathbf r_{2,K})
    \right)^{1/2}
    \label{eq:Jaab2}.
  \end{multline}
\end{remark}

Estimate (\ref{eq:Js0a}) is not a usual a posteriori error estimate as
it involves $\phi^{\eps}$ and $q^{\eps}$ on the right hand side. If we
can guess $\phi^{\eps} - \phi^{\eps}_h$ and $q^{\eps} - q^{\eps}_h$,
(\ref{eq:Js0a}) can be used to derive an anisotropic error
indicator. In order to do that, we introduce an error estimator based
on the superconvergent gradient recovery, namely Zienkiewicz Zhu like
error estimator \cite{AZCZ89,ZZ87,ZZ92} in its simplest form as defined in \cite{AinsworthOden97,Rodriguez94},
{\it i.e.} the difference between $\nabla \phi^{\eps}_h$ resp. $\nabla
q^{\eps}_h$ and an approximate $L^{2}$ projection of $\nabla
\phi^{\eps}_h$ resp. $\nabla q^{\eps}_h$ onto $\VV^{2}$ :
\begin{gather}
  \mathbf \eta^{ZZ} (\phi^{\eps}_h) =
  \begin{pmatrix}
	\eta_{1}^{ZZ}(\phi^{\eps}_h)
	\\
	\eta_{2}^{ZZ}(\phi^{\eps}_h)
  \end{pmatrix}
  =
  \begin{pmatrix}
	(I-\Pi_h)\left(\pd{\phi^{\eps}_h}{x_1}\right)
	\\
	(I-\Pi_h)\left(\pd{\phi^{\eps}_h}{x_2}\right)
  \end{pmatrix},
  \label{eq:Jt0a}
\end{gather}
where $\Pi_h$ is the projection operator which builds values at
vertices $P$ from constant values on triangles using the formula
\begin{gather}
  \begin{pmatrix}
    \Pi_h\left(\pd{\phi^{\eps}_h}{x_1}\right)(P)
    \nonumber\\
    \Pi_h\left(\pd{\phi^{\eps}_h}{x_2}\right)(P)
  \end{pmatrix}
  =
  \dfrac{1}
  {\displaystyle{\sum_{\underset{P\in K}{\text{tria. }K}}|K|}}
  \begin{pmatrix}
    {\displaystyle{\sum_{\underset{P\in K}{\text{tria. }K}}|K| 
        \left(\pd{\phi^{\eps}_h}{x_1}\right)_{|K}}}
    \nonumber\\
    {\displaystyle{\sum_{\underset{P\in K}{\text{tria. }K}}|K| 
        \left(\pd{\phi^{\eps}_h}{x_2}\right)_{|K}}}
  \end{pmatrix}.
  \nonumber
\end{gather}
Z-Z like error estimator is asymptotically exact for a parallel meshes
and smooth solutions \cite{AinsworthOden97,Rodriguez94}. Our error indicator is obtained by
replacing the matrices $G_K (e)$ and $G_K (e_q)$ by approximate ones
$\tilde{G}_K (\phi^{\eps}_h)$ and $\tilde{G}_K (q^{\eps}_h)$ defined by
\begin{gather}
  \tilde{G}_K (\phi^{\eps}_h)
  =
  \begin{pmatrix}
    \displaystyle{\int_K (\eta_{1}^{ZZ}(\phi^{\eps}_h))^2 dx}
    & \displaystyle{\int_K \eta_{1}^{ZZ}(\phi^{\eps}_h)\eta_{2}^{ZZ}(\phi^{\eps}_h)dx}
    \\ 
    \displaystyle{\int_K \eta_{1}^{ZZ}(\phi^{\eps}_h)\eta_{2}^{ZZ}(\phi^{\eps}_h) dx}
    & \displaystyle{\int_K (\eta_{2}^{ZZ}(\phi^{\eps}_h))^2 dx}
  \end{pmatrix}.
  \label{eq:Ju0a}
\end{gather}
The anisotropic error indicator defined on each triangle $K$ takes the form
\begin{multline}
  \Big(\eta_K^A (\phi^{\eps}_h, q^{\eps}_h))\Big)^{2} =
    \Bigg(
      ||f+\nabla \cdot (\mathbb A \nabla \phi^{\eps}_h) + 
      (1-\eps) \nabla_\parallel \cdot (A_\parallel \nabla_\parallel
      q^{\eps}_h )||_{L^{2}(K)}
      \\
      + \frac{1}{2\lambda_{2,K}^{1/2}} ||[\mathbb A\nabla \phi^{\eps}_h \cdot
      n] ||_{L^{2}(\partial K)}
      + \frac{1-\varepsilon }{2\lambda_{2,K}^{1/2}} ||[A_\parallel\nabla_\parallel
      q^{\eps}_h \cdot n] ||_{L^{2}(\partial K)}
    \Bigg) \\
    \times
    \left(
      \lambda_{1,K}^{2}(\mathbf r_{1,K} \tilde{G}_K (\phi^{\eps}_h)\mathbf r_{1,K}) +
      \lambda_{2,K}^{2}(\mathbf r_{2,K} \tilde{G}_K (\phi^{\eps}_h)\mathbf r_{2,K})
    \right)^{1/2}
    \\
    +
    (1-\varepsilon )
    \Bigg(
      ||\nabla_\parallel \cdot (A_\parallel \nabla_\parallel (\phi^\eps_h-\eps q^\eps_h))||_{L^{2}(K)}
      + \frac{1}{2\lambda_{2,K}^{1/2}} ||[A_\parallel\nabla_\parallel
      (\phi^\eps_h-\eps q^\eps_h) \cdot n] ||_{L^{2}(\partial K)}
      \\
      +\lambda_{2,K}^2 || \nabla \cdot (\mathbb A \nabla q^\eps_h ) ||_{L^{2}(K)}
      + \lambda_{2,K}^{3/2}
      || \mathbb A \nabla q^\eps_h  \cdot n||_{L^{2}(\partial K)}    
    \Bigg) 
    \\
    \times
    \left(
      \lambda_{1,K}^{2}(\mathbf r_{1,K} \tilde{G}_{K} (q^{\eps}_h)\mathbf r_{1,K}) +
      \lambda_{2,K}^{2}(\mathbf r_{2,K} \tilde{G}_{K} (q^{\eps}_h)\mathbf r_{2,K})
    \right)^{1/2}
  \label{eq:Jdab}.
\end{multline}
Introducing
\begin{align}
  \rho_{\phi,K}
  &=
  ||f+\nabla \cdot (\mathbb A \nabla \phi^{\eps}_h) + 
  (1-\eps) \nabla_\parallel \cdot (A_\parallel \nabla_\parallel
  q^{\eps}_h )||_{L^{2}(K)}
  \\
  & \quad+ \frac{1}{2\lambda_{2,K}^{1/2}} ||[\mathbb A\nabla \phi^{\eps}_h \cdot
  n] ||_{L^{2}(\partial K)}
  + \frac{1-\varepsilon }{2\lambda_{2,K}^{1/2}} ||[A_\parallel\nabla_\parallel
  q^{\eps}_h \cdot n] ||_{L^{2}(\partial K)} ,
  \\
  \Big(\eta_{\phi ,K}^A (\phi^{\eps}_h, q^{\eps}_h))\Big)^{2}
  &=
  \rho_{\phi,K}
  \left(
  \lambda_{1,K}^{2}(\mathbf r_{1,K} \tilde{G}_K (\phi^{\eps}_h)\mathbf r_{1,K}) +
  \lambda_{2,K}^{2}(\mathbf r_{2,K} \tilde{G}_K (\phi^{\eps}_h)\mathbf r_{2,K})
  \right)^{1/2} 
  \label{eq:Jmdb}
\end{align}
and
\begin{align}
  \rho_{q,K}
  &=
    (1-\varepsilon )
    \Bigg(
      ||\nabla_\parallel \cdot (A_\parallel \nabla_\parallel (\phi^\eps_h-\eps q^\eps_h))||_{L^{2}(K)}
      \\
      &\quad+ \frac{1}{2\lambda_{2,K}^{1/2}} ||[A_\parallel\nabla_\parallel
      (\phi^\eps_h-\eps q^\eps_h) \cdot n] ||_{L^{2}(\partial K)}
      \\
      &\quad+\lambda_{2,K}^2 || \nabla \cdot (\mathbb A \nabla q^\eps_h ) ||_{L^{2}(K)}
      + \lambda_{1,K}^{3/2}
      || \mathbb A \nabla q^\eps_h  \cdot n||_{L^{2}(\partial K)}    
      \Bigg),
      \\
      \Big(\eta_{q,K}^A (\phi^{\eps}_h, q^{\eps}_h))\Big)^{2}
      &=
      \rho_{a,K}
      \left(
      \lambda_{1,K}^{2}(\mathbf r_{1,K} \tilde{G}_K (\phi^{\eps}_h)\mathbf r_{1,K}) +
      \lambda_{2,K}^{2}(\mathbf r_{2,K} \tilde{G}_K (\phi^{\eps}_h)\mathbf r_{2,K})
      \right)^{1/2} 
      \label{eq:Jndb}
\end{align}
allows to introduce a more compact notation
\begin{gather}
  \Big(\eta_K^A (\phi^{\eps}_h, q^{\eps}_h))\Big)^{2} 
   =
  \Big(\eta_{\phi ,K}^A (\phi^{\eps}_h, q^{\eps}_h))\Big)^{2}
  + \Big(\eta_{q,K}^A (\phi^{\eps}_h, q^{\eps}_h))\Big)^{2}
  \label{eq:J7ab}.
\end{gather}

\subsection{Adaptive algorithm}

The goal of our adaptive algorithm is to build a triangulation such
that the error is equidistributed in the direction of the maximal and
minimal stretching of triangles and the relative global error
indicator is closed to prescribed tolerance $TOL$. We have
\begin{equation}
  0.75\ TOL\le
  \dfrac{
    \Bigl(\eta^A (\phi^{\eps}_h, q^{\eps}_h)\Bigr)
  }
  {
    \sqrt{\displaystyle{\int_\Omega|\nabla \phi^{\eps}_h|^2}}
  }
  \le 1.25\ TOL.
  \label{goal_adap}
\end{equation}
with
\begin{gather}
  \Bigl(\eta^A (\phi^{\eps}_h, q^{\eps}_h)\Bigr)^2
  =
  \displaystyle{\sum_{\text{tria. }K}
    \Bigl(\eta_K^A (\phi^{\eps}_h, q^{\eps}_h)\Bigr)^2}
  \label{eq:Jfab}.
\end{gather}

A sufficient condition to satisfy (\ref{goal_adap}) is to build a
triangulation with large aspect ratio such that
\begin{equation*}
  \begin{split}
	\dfrac{0.75^2 TOL^2}{NT}
	\int_\Omega|\nabla \phi^{\eps}_h|^2
	\le
	\Bigl(\eta^A_{K}(\phi^{\eps}_h, q^{\eps}_h)\Bigr)^2
	\le \dfrac{1.25^2 TOL^2}{NT}
	\int_\Omega|\nabla \phi^{\eps}_h|^2
  \end{split}
\end{equation*}
for each triangle $K$, where $NT$ is the number of triangles of the
mesh. Since the mesh generator \texttt{BL2D} mesh generator used in
our simulations \cite{bl2d} requires data on the mesh vertices rather
than on the triangles, we need to translate the above local triangle
condition into a condition for mesh for the mesh vertices. Let us
introduce a point defined error indicator :
\begin{gather}
  \eta_P^A (\phi^{\eps}_h, q^{\eps}_h) =
  \left(
    \sum_{\underset{P\in K}{\text{tria. }K}} 
    \Bigl(\eta^A_{K}(\phi^{\eps}_h, q^{\eps}_h)\Bigr)^4
  \right)^{1/4}
  \label{eq:Jgab}
\end{gather}
and hence
\begin{gather}
  \sum_P
  \Bigl(\eta^A_{P}(\phi^{\eps}_h, q^{\eps}_h)\Bigr)^4
  =
  3
  \sum_K
  \Bigl(\eta^A_{K}(\phi^{\eps}_h, q^{\eps}_h)\Bigr)^4
  \label{eq:Jhab}.
\end{gather}
Therefore, the following local condition holds
\begin{gather}
  \begin{split}
    \dfrac{\sqrt{3}}{NV}0.75^2 TOL^2
    \int_\Omega|\nabla \phi^{\eps}_h|^2
    \le
    \Bigl(\eta^A_{P}(\phi^{\eps}_h, q^{\eps}_h)\Bigr)^2
    \le \dfrac{\sqrt{3}}{NV}1.25^2 TOL^2
    \int_\Omega|\nabla \phi^{\eps}_h|^2
  \end{split}
  \label{eq:Jiab}
\end{gather}
where $NV$ is a number of mesh vertices. Then, we define
$\eta^{A}_{i,P} (\phi^{\eps}_h, q^{\eps}_h)$, with $i = 1,2$ at the
mesh nodes
\begin{gather}
  \Bigl(\eta^A_{i,P}(\phi^{\eps}_h, q^{\eps}_h)\Bigr)^4
  =
  \sum_{\underset{P\in K}{\text{tria. }K}} 
  \lambda_{i,K}^{2}
  \left(
    \mathbf r_{i,K} 
    \left(  
      \rho_{\phi,K}^2\tilde{G}_K (\phi^{\eps}_h)
      +
      \rho_{q,K}^2
      \tilde{G}_{K} (q^{\eps}_h)
    \right)
    \mathbf r_{i,K} 
  \right)
  \label{eq:Jjab}.
\end{gather}
The value of $\eta^A_{i,P}(\phi^{\eps}_h, q^{\eps}_h)$ represents the
error in the direction of the maximum and minimum stretching of the
triangle $K$. We note that the point error indicator is bounded by 
\begin{multline}
  \Bigl(\eta^A_{1,P}(\phi^{\eps}_h, q^{\eps}_h)\Bigr)^4 +
  \Bigl(\eta^A_{2,P}(\phi^{\eps}_h, q^{\eps}_h)\Bigr)^4
  \leq
  \Bigl(\eta^A_{P}(\phi^{\eps}_h, q^{\eps}_h)\Bigr)^4
  \\
  \qquad\leq
  2 \left( \Bigl(\eta^A_{1,P}(\phi^{\eps}_h, q^{\eps}_h)\Bigr)^4 +
  \Bigl(\eta^A_{2,P}(\phi^{\eps}_h, q^{\eps}_h)\Bigr)^4 \right)
  \label{eq:Jqdb}.
\end{multline}

The mesh adaptation algorithm can be summarized as follows. For all
vertices $P$ of the mesh $\eta^A_{1,P} (\phi^{\eps}_h, q^{\eps}_h)$
and $\eta^A_{2,P} (\phi^{\eps}_h, q^{\eps}_h)$ are
computed. Furthermore, we compute $\lambda_{1,P}$ and $\lambda_{2,P}$
as an average of the $\lambda_{1,K}$ and $\lambda_{2,K}$ of the
neighboring triangles $K$.
  
The input data for the \texttt{BL2D} mesh generator is computed: the
stretching amplitude $h_{i,P}$, $i=1,2$ and the direction of the
anisotropy $\theta_P$. In the first step new $h_{i,P}$ are
obtained. For every mesh point $P$, if
\begin{gather}
  4 \Bigl(\eta^A_{i,P}(\phi^{\eps}_h, q^{\eps}_h)\Bigr)^4 <
  \dfrac{3}{(NV)^2}0.75^4 TOL^4
  \left( \int_\Omega|\nabla \phi^{\eps}_h|^2\right)^2
  \label{eq:Jmab}
\end{gather}
then $h_{i,P}$ is set to $3/2 \lambda_{i,P}$. If
\begin{gather}
  2 \Bigl(\eta^A_{i,P}(\phi^{\eps}_h, q^{\eps}_h)\Bigr)^4 >
  \dfrac{3}{(NV)^2}1.25^4 TOL^4
  \left( \int_\Omega|\nabla \phi^{\eps}_h|^2\right)^2
  \label{eq:Jnab}
\end{gather}
then $h_{i,P}$ is set to $2/3 \lambda_{i,P}$. Otherwise, $h_{i,P}$ is
set to $\lambda_{i,P}$.

In the second step of the mesh adaptation the new anisotropy direction
is found. For every mesh point average matrices $\tilde{G}_P
(\phi^\eps_h)$ and $\tilde{G}_{P} (q^\eps_h)$ are calculated. The
angle $\theta_P$ is set to the angle between the eigenvector
corresponding to the largest eigenvalue of the matrix
\begin{gather}
  \rho_{\phi,K}^2\tilde{G}_K (\phi^{\eps}_h)
  +
  \rho_{q,K}^2
  \tilde{G}_{K} (q^{\eps}_h)
  \label{eq:J8ab}
\end{gather}
and the $Ox$ direction. Finally, new mesh is generated using the
\texttt{BL2D} mesh generator.

\subsection{Simplified error indicator}

The anisotropic error indicator introduced in the previous sections
involves the term $\tilde{G}_K (q^\eps_h)$. This means that the
perpendicular derivatives of $q^\eps_h$ will play role in the error
estimation procedure. This is not necessarily desirable since in some
cases this may result in mesh over-refinement in the direction
perpendicular to the anisotropy direction. That is to say the adaptive
algorithm could continue to refine the mesh in the perpendicular
direction without any increase of precision in $\phi^\eps_h$. This is
why we propose an alternative approach where the simplified error
indicator is related only to the residue of the first equation and the
matrix $\tilde{G}_K (\phi^\eps_h)$:
\begin{multline}
  \Big(\eta_K^{SA} (\phi^{\eps}_h, q^{\eps}_h)\Big)^{2} =
  \Bigg(
  ||f+\nabla \cdot (\mathbb A \nabla \phi^{\eps}_h) + 
  (1-\eps) \nabla_\parallel \cdot (A_\parallel \nabla_\parallel
  q^{\eps}_h )||_{L^{2}(K)}
  \\
  + \frac{1}{2\lambda_{2,K}^{1/2}} ||[\mathbb A\nabla \phi^{\eps}_h \cdot
    n] ||_{L^{2}(\partial K)}
  + \frac{1-\varepsilon }{2\lambda_{2,K}^{1/2}} ||[A_\parallel\nabla_\parallel
    q^{\eps}_h \cdot n] ||_{L^{2}(\partial K)}
  \Bigg) \\
  \times
  \left(
  \lambda_{1,K}^{2}(\mathbf r_{1,K} \tilde{G}_K (\phi^{\eps}_h)\mathbf r_{1,K}) +
  \lambda_{2,K}^{2}(\mathbf r_{2,K} \tilde{G}_K (\phi^{\eps}_h)\mathbf r_{2,K})
  \right)^{1/2}
  \label{eq:Jdab2},
\end{multline}
or in more compact notation:
\begin{gather}
  \Big(\eta_K^{SA} (\phi^{\eps}_h, q^{\eps}_h))\Big)^{2} =
  \Big(\eta_{\phi ,K}^A (\phi^{\eps}_h, q^{\eps}_h))\Big)^{2}
  \label{eq:Jebb}.
\end{gather}
As in the previous section the nodal simplified error indicator is
defined:
\begin{gather}
  \sum_P
  \Bigl(\eta^{SA}_{P}(\phi^{\eps}_h, q^{\eps}_h)\Bigr)^4
  =
  3
  \sum_K
  \Bigl(\eta^{SA}_{K}(\phi^{\eps}_h, q^{\eps}_h)\Bigr)^4
  ,
  \\
  \Bigl(\eta^{SA}_{i,P}(\phi^{\eps}_h, q^{\eps}_h)\Bigr)^4
  =
  \sum_{\underset{P\in K}{\text{tria. }K}} 
  \rho_{\phi,K}^2
  \lambda_{i,K}^{2}
  \mathbf r_{i,K} 
  \tilde{G}_{K} (\phi^{\eps}_h)
  \mathbf r_{i,K} 
  \label{eq:Jfbb}.
\end{gather}
The obtained adaptive algorithm is almost the same as before. Only now
$\eta^{A}_{i,P}$ is replaced by a simplified version $\eta^{A}_{i,P}$,
the coarsening criterion is slightly changed :
 if
\begin{gather}
  2 \Bigl(\eta^{SA}_{i,P}(\phi^{\eps}_h, q^{\eps}_h)\Bigr)^4 <
  \dfrac{3}{(NV)^2}0.75^4 TOL^4
  \left( \int_\Omega|\nabla \phi^{\eps}_h|^2\right)^2
  \label{eq:Jmab1}
\end{gather}
then $h_{i,P}$ is set to $3/2 \lambda_{i,P}$. If
\begin{gather}
  2 \Bigl(\eta^{SA}_{i,P}(\phi^{\eps}_h, q^{\eps}_h)\Bigr)^4 >
  \dfrac{3}{(NV)^2}1.25^4 TOL^4
  \left( \int_\Omega|\nabla \phi^{\eps}_h|^2\right)^2
  \label{eq:Jnab1}
\end{gather}
then $h_{i,P}$ is set to $2/3 \lambda_{i,P}$. Otherwise, $h_{i,P}$ is
set to $\lambda_{i,P}$.
Finally, the mesh anisotropy direction is aligned with the largest
eigenvalue of the matrix $\tilde{G}_K(\phi^\eps_h)$.

\subsection{Numerical results}\label{sec:test case}

\subsubsection{Numerical study of the effectivity index and the
  convergence of the stabilized AP scheme}

Let us define 
\begin{gather}
  \eta^{ZZ} =\left( \sum_{K\in \tau_h} \int_K |\mathbf \eta^{ZZ}(\phi^{\eps}_h)|^2 \right)^{1/2} ,
  \\
  \eta^{A}  =\left( \sum_{K\in \tau_h} \int_K (\mathbf \eta^{A} (\phi^{\eps}_h))^2 \right)^{1/2} ,\\
  \eta^{SA} =\left( \sum_{K\in \tau_h} \int_K (\mathbf \eta^{SA}(\phi^{\eps}_h))^2 \right)^{1/2} ,
  \label{eq:Jgbb}
\end{gather}
the Z-Z error estimator, the anisotropic error estimator and the
simplified error indicator. We also define
\begin{gather}
  ei^{ZZ} = \frac{\eta^{ZZ}}{ || \nabla e||_{L^2(\Omega)}}, \\
  ei^{A } = \frac{\eta^{A }}{ (\int_\Omega A\nabla e \cdot \nabla
    e + \eps (1-\eps) \int_\Omega A_\parallel \nabla_\parallel e_q
    \cdot \nabla_\parallel e_q)^{1/2}}, \\ 
  ei^{SA} = \frac{\eta^{SA}}{ (\int_\Omega A\nabla e \cdot \nabla e)^{1/2}}, 
\label{eq:Jhbb}
\end{gather}
the effectivity indices.

We test the robustness of the error indicators and the convergence of
the stabilized AP scheme in the following test
case. Let $\Omega = (0,1) \times (0,1)$, the anisotropy direction is
given by
\begin{gather}
  b = \frac{B}{|B|}\, , \quad
  B =
  \left(
    \begin{array}{c}
      \alpha  (2y-1) \cos (\pi x) + \pi \\
      \pi \alpha  (y^2-y) \sin (\pi x)
    \end{array}
  \right)
  \label{eq:J99a}\,\quad. 
\end{gather}
Note that we have $B \neq 0$ in the
computational domain. The parameter $\alpha$ describes the variations
of the anisotropy direction. For $\alpha = 0$ the anisotropy is
aligned in the direction of $x$ coordinate. We set $A_\perp =
A_\parallel = 1$. Now, we choose $\phi^\varepsilon $ to be a function
that converges to the limit solution $\phi^{0}$ as $\varepsilon
\rightarrow 0$:
\begin{gather}
  \phi^{0} = \sin \left(\pi y +\alpha (y^2-y)\cos (\pi x) \right), \\
  \phi^{\varepsilon } = \sin \left(\pi y +\alpha (y^2-y)\cos (\pi
  x) \right) + \varepsilon \cos \left( 2\pi x\right) \sin \left(\pi
  y \right)
  \label{eq:Jc0a}.
\end{gather}
Finally, the force term is calculated accordingly, i.e.
\begin{gather}
  f = - \nabla_\perp \cdot (A_\perp \nabla_\perp \phi^{\varepsilon })
  - \frac{1}{\varepsilon }\nabla_\parallel \cdot (A_\parallel \nabla_\parallel \phi^{\varepsilon })
  \nonumber.
\end{gather}

We study the effectivity indices on the unstructured meshes for
constant and variable anisotropy direction ($\alpha =0$ and $\alpha
=2$ respectively) and for small and large anisotropy ($\eps=1$ and
$\eps = 10^{-10}$ respectively).

\begin{table}[!ht]
  \centering
  \begin{tabular}{|c||c|c|c|c|}
    \hline\rule{0pt}{2.5ex}
    $h_1$--$h_2$ & $ei^{ZZ}$ & $ei^{A}$ & $ei^{SA}$ & $|| \nabla (\phi
    ^\eps_h -\phi ^\eps) ||_{L^2(\Omega )} / || \nabla \phi ^\eps_h ||_{L^2(\Omega )}$ \\
    \hline
    \hline $0.1-0.1$           & 1.05 & 2.53 & 2.53 & $1.5 \times 10^{-1}$\\
    \hline $0.05-0.05$         & 1.02 & 2.54 & 2.54 & $7.7 \times 10^{-2}$\\
    \hline $0.025-0.025$       & 1.01 & 2.54 & 2.54 & $3.9 \times 10^{-2}$\\
    \hline $0.0125-0.0125$     & 1.00 & 2.53 & 2.53 & $1.9 \times 10^{-2}$\\
    \hline $0.00625-0.00625$   & 1.00 & 2.53 & 2.53 & $9.8 \times 10^{-3}$\\
    \hline
  \end{tabular}
  \vspace{1ex}
  \\$\alpha = 0$, $\eps = 1$\\
  \vspace{2ex}
  \begin{tabular}{|c||c|c|c|c|}
    \hline\rule{0pt}{2.5ex}
    $h_1$--$h_2$ & $ei^{ZZ}$ & $ei^{A}$ & $ei^{SA}$ & $|| \nabla (\phi
    ^\eps_h -\phi ^\eps) ||_{L^2(\Omega )} / || \nabla \phi ^\eps_h ||_{L^2(\Omega )}$ \\
    \hline
    \hline $0.1-0.1$           & 0.99 & 4.74 & 4.68 & $8.0 \times 10^{-2}$\\
    \hline $0.05-0.05$         & 0.99 & 4.78 & 4.71 & $4.1 \times 10^{-2}$ \\
    \hline $0.025-0.025$       & 0.96 & 4.76 & 4.67 & $2.1 \times 10^{-2}$\\
    \hline $0.0125-0.0125$     & 0.93 & 4.89 & 4.65 & $1.1 \times 10^{-2}$\\
    \hline $0.00625-0.00625$   & 0.87 & 5.08 & 4.68 & $6.1 \times 10^{-3}$\\
    \hline
  \end{tabular}
  \vspace{1ex}
  \\$\alpha = 0$, $\eps = 10^{-10}$\\
  \vspace{2ex}
  \begin{tabular}{|c||c|c|c|c|}
    \hline\rule{0pt}{2.5ex}
    $h_1$--$h_2$ & $ei^{ZZ}$ & $ei^{A}$ & $ei^{SA}$ & $|| \nabla (\phi
    ^\eps_h -\phi ^\eps) ||_{L^2(\Omega )} / || \nabla \phi ^\eps_h ||_{L^2(\Omega )}$ \\
    \hline
    \hline $0.1-0.1$           & 1.05 & 2.54 & 2.54 & $1.5 \times 10^{-1}$\\
    \hline $0.05-0.05$         & 1.02 & 2.54 & 2.54 & $7.7 \times 10^{-2}$\\
    \hline $0.025-0.025$       & 1.01 & 2.54 & 2.54 & $3.9 \times 10^{-2}$\\
    \hline $0.0125-0.0125$     & 1.00 & 2.53 & 2.53 & $1.9 \times 10^{-2}$\\
    \hline $0.00625-0.00625$   & 1.00 & 2.53 & 2.53 & $9.9 \times 10^{-3}$\\
    \hline
  \end{tabular}
  \vspace{1ex}
  \\$\alpha = 2$, $\eps = 1$\\
  \vspace{2ex}
  \begin{tabular}{|c||c|c|c|c|}
    \hline\rule{0pt}{2.5ex}
    $h_1$--$h_2$ & $ei^{ZZ}$ & $ei^{A}$ & $ei^{SA}$ & $|| \nabla (\phi
    ^\eps_h -\phi ^\eps) ||_{L^2(\Omega )} / || \nabla \phi ^\eps_h ||_{L^2(\Omega )}$ \\
    \hline
    \hline $0.1-0.1$           & 0.99 & 4.07 & 3.99 & $ 1.1 \times 10^{-1}$\\
    \hline $0.05-0.05$         & 0.98 & 4.24 & 4.07 & $ 5.4 \times 10^{-2}$\\
    \hline $0.025-0.025$       & 0.97 & 4.30 & 4.09 & $ 2.7 \times 10^{-2}$\\
    \hline $0.0125-0.0125$     & 0.94 & 4.41 & 4.08 & $ 1.4 \times 10^{-2}$\\
    \hline $0.00625-0.00625$   & 0.90 & 4.69 & 4.19 & $ 7.5 \times 10^{-3}$\\
    \hline
  \end{tabular}
  \vspace{1ex}
  \\$\alpha = 2$, $\eps = 10^{-10}$
  
  \caption{Effectivity indices for isotropic meshes}
  \label{tab:ei_iso}
\end{table}

Table \ref{tab:ei_iso} shows the numerical results for isotropic
unstructured meshes in different regimes. In the case of no anisotropy
($\eps =1$) the Zienkiewicz-Zhu error estimator converges to true
error as $h$ goes to zero. The simplified and full effectivity indexes
are the same and converge also to a constant value. In the case of
small anisotropy ($\eps = 10^{-10}$) the effectivity index for
Zienkiewicz-Zhu error estimator is close to one for all testes
isotropic meshes in the case of variable anisotropy
direction. However, its value seems to decrease with the mesh size
meaning that the estimator slightly underestimate the true error for
fine meshes.  
The divergence
is observed for a constant direction of anisotropy and small value of
$\eps$.
This shows that the Zienkiewicz-Zhu error indicator is not always
equivalent to the true error.
The stabilized Asymptotic Preserving scheme converges to the exact
solution in all four cases with the optimal convergence rate.

Table \ref{tab:ei_aniso_bc} presents the numerical results
corresponding to the of large anisotropy aligned with the coordinate
system. This time we are interested in the behavior of the error
indicators when the mesh refinement is anisotropic. In the first table
the mesh is refined in the direction perpendicular to the anisotropy
direction with aspect ration ranging from 10 to 1280. In this case the
Zienkiewicz-Zhu remains constant and close to 1.  The relative error
converges until the aspect ratio of 80 is reached. The effectivity
index for the full error indicator increases from $6.28$ to $15.6$
with the mesh size until the aspect ratio reaches the value of 160. At
the same time the effectivity index for the simplified error indicator
is between $5.57$ and $6.64$. This suggests that the latter could
perform better in the anisotropic mesh refinement. Its effectivity
index does not seem to depend on the aspect ration wen the mesh is
refined in the ``right'' direction (perpendicular to the anisotropy).

Next, the
influence of the mesh refinement in the ``wrong'' (parallel to the
anisotropy) direction is performed. For aspect ration ranging from 1
to 16 the divergence of the $ei^{ZZ}$ and the relative error is
clearly observed. In fact, all effectivity indexes approach zero with
the refinement.
The last table displays the results of the
convergence of $ei^{ZZ}$ in the case of anisotropic mesh with aspect
ration 4 and triangles aligned in the ``wrong'' direction. In this
case, when the mesh is refined in both direction, the effectivity
index for Zienkiewicz-Zhu error estimator approaches 1. The
effectivity indexes of both error indicator diverge.

\begin{table}[!ht]
  \centering
  \begin{tabular}{|c||c|c|c|c|}
    \hline\rule{0pt}{2.5ex}
    $h_1$--$h_2$ & $ei^{ZZ}$ & $ei^{A}$ & $ei^{SA}$ & $|| \nabla (\phi
    ^\eps_h -\phi ^\eps) ||_{L^2(\Omega )} / || \nabla \phi ^\eps_h ||_{L^2(\Omega )}$ \\
    \hline
    \hline $0.1-0.01$         & 0.98 & 6.28 & 5.83 & $7.8 \times 10^{-3}$\\
    \hline $0.1-0.005$        & 0.97 & 7.68 & 6.09 & $4.3 \times 10^{-3}$ \\
    \hline $0.1-0.0025$       & 0.95 & 10.6 & 6.20 & $2.5 \times 10^{-3}$\\
    \hline $0.1-0.00125$      & 0.93 & 14.2 & 6.64 & $1.7 \times 10^{-3}$\\
    \hline $0.1-0.000625$     & 0.95 & 15.6 & 5.57 & $1.6 \times 10^{-3}$\\
    \hline $0.1-0.0003125$    & 0.98 & 9.88 & 3.94 & $2.3 \times 10^{-3}$\\
    \hline $0.1-0.00015625$   & 0.97 & 9.20 & 4.01 & $2.2 \times 10^{-3}$\\
    \hline $0.1-0.000078125$  & 0.98 & 5.09 & 2.95 & $3.1 \times 10^{-3}$\\
    \hline
  \end{tabular}
  \vspace{1ex}
  \\Aspect ratio from 1:10 to 1:1280\\
  \vspace{2ex}
  \begin{tabular}{|c||c|c|c|c|}
    \hline\rule{0pt}{2.5ex}
    $h_1$--$h_2$ & $ei^{ZZ}$ & $ei^{A}$ & $ei^{SA}$ & $|| \nabla (\phi
    ^\eps_h -\phi ^\eps) ||_{L^2(\Omega )} / || \nabla \phi ^\eps_h ||_{L^2(\Omega )}$ \\
    \hline
    \hline $0.1 -0.1$         & 0.99 & 4.74 & 4.68 & $8.0 \times 10^{-2}$\\
    \hline $0.05-0.1$         & 0.91 & 4.56 & 4.33 & $1.2 \times 10^{-1}$\\
    \hline $0.025-0.1$        & 0.32 & 4.75 & 4.04 & $4.4 \times 10^{-1}$\\
    \hline $0.0125-0.1$       & 0.005 & 0.44 & 0.36 & $5.2 \times 10^{1}$\\
    \hline $0.00625-0.1$      & 0.0002 & 0.075 & 0.059 & $1.9 \times 10^{3}$\\
    \hline
  \end{tabular}
  \vspace{1ex}
  \\Aspect ratio from 1:1 to 16:1\\
  \vspace{2ex}

  \begin{tabular}{|c||c|c|c|c|}
    \hline\rule{0pt}{2.5ex}
    $h_1$--$h_2$ & $ei^{ZZ}$ & $ei^{A}$ & $ei^{SA}$ & $|| \nabla (\phi
    ^\eps_h -\phi ^\eps) ||_{L^2(\Omega )} / || \nabla \phi ^\eps_h ||_{L^2(\Omega )}$\\
    \hline
    \hline $0.025-0.1$         & 0.32 & 4.75 & 4.04 & $4.4 \times 10^{-1}$\\
    \hline $0.0125-0.05$       & 0.40 & 6.66 & 5.66 & $2.0 \times 10^{-1}$\\
    \hline $0.00625-0.025$     & 0.53 & 8.86 & 7.53 & $7.4 \times 10^{-2}$\\
    \hline $0.003125-0.0125$   & 0.55 & 9.54 & 8.09 & $3.6 \times 10^{-2}$\\
    \hline $0.0015625-0.00625$ & 0.69 & 12.87 & 10.09 & $1.4 \times 10^{-2}$\\
    \hline
  \end{tabular}
  \vspace{1ex}
  \\Aspect ratio 4:1 

  \caption{Effectivity indices for anisotropic meshes for $\alpha = 0$
    and $\eps=10^{-10}$}
  \label{tab:ei_aniso_bc}
\end{table}

\subsubsection{Mesh adaptation}

We now apply our adaptive algorithm to build a sequence triangulations
in the following way starting from an isotropic unstructured grid with
$h=0.02$. At every iteration of the algorithm the error indicator is
used to construct a subsequent mesh. We compare results of the
simplified and full error indicators in various regimes: small and
large anisotropy, $b$ direction constant and variable. We focus on the
resulting mesh size and error in the $H^1$-norm as well as on the
error convergence in terms of prescribed tolerance $TOL$.

Let $\Omega = (0,1) \times (0,1)$, the anisotropy direction is given
by (\ref{eq:J99a}). We set $A_\perp = A_\parallel = 1$. We choose
$\phi^\varepsilon $ to be a function that converges to the limit
solution $\phi^{0}$ as $\varepsilon \rightarrow 0$:
\begin{gather}
  \phi^{0} = \sin \left(\pi y +\alpha (y^2-y)\cos (\pi x) \right)
  e^{-(\frac{\pi y +\alpha (y^2-y)\cos (\pi x) - 0.5}{\delta})^2}
  , \\
  \phi^{\varepsilon } = \sin \left(\pi y +\alpha (y^2-y)\cos (\pi
  x) \right) 
  e^{-(\frac{\pi y +\alpha (y^2-y)\cos (\pi x) - 0.5}{\delta})^2}
  + \varepsilon \cos \left( 2\pi x\right) \sin \left(\pi
  y \right)
  \label{eq:Jc0a}.
\end{gather}
Finally, the force term is calculated accordingly. The limit solution
is nothing else than the limit solution from previous section
multiplied by a Gaussian following the anisotropy direction. The
parameter $\delta$ controls the width of the exponential part. Setting
$\delta = 0.1$ in our simulations yields a solution which has a strong
gradient in the direction perpendicular to the anisotropy direction in
a small subregion of a computational domain. The adaptive algorithm
should be able to capture this strong variation of the solution and
produce a mesh that is much finer in this subregion than in the
remaining part of the domain.

$ $
\vspace{1ex}

\paragraph{Small anisotropy $\eps =1$, constant and variable direction of $b$ ($\alpha = 0$ and
  $\alpha = 2$)} 

$ $
\vspace{1ex}

In the first two test cases the adaptive algorithm is studied in the
$\eps =1$ regime, {\it i.e.} when no anisotropy is present. In this
case the two error indicators : full and simplified are equivalent.

\begin{table}
  \centering
  \begin{tabular}{|c||c|c|c|c|c|}
    \hline\rule{0pt}{2.5ex}
    $TOL$ & $err$ & $NV$ & $ei^{ZZ}$ & $ei^{A}$ & $ei^{SA}$ \\
    \hline
    \hline 0.25    & 0.096 &   698 & 1.03 & 2.55 & 2.55 \\
    \hline 0.125   & 0.048 &  2457 & 1.01 & 2.54 & 2.54 \\
    \hline 0.0625  & 0.024 &  8834 & 1.00 & 2.57 & 2.57 \\
    \hline 0.03125 & 0.012 & 34587 & 1.00 & 2.54 & 2.54 \\
    \hline
  \end{tabular}
  \caption{$H^1$ error ($err$), number of nodes ($NV$) and effectivity
    indices for mesh iteration $15$ for constant direction of $b$ and $\eps =1$}
  \label{tab:adapt_bc_e1}
\end{table}
\begin{table}
  \centering
  \begin{tabular}{|c||c|c|c|c|c|}
    \hline\rule{0pt}{2.5ex}
    $TOL$ & $err$ & $NV$ & $ei^{ZZ}$ & $ei^{A}$ & $ei^{SA}$ \\
    \hline
    \hline 0.25    & 0.094 &   785 & 1.03 & 2.58 & 2.58 \\
    \hline 0.125   & 0.047 &  2696 & 1.01 & 2.59 & 2.59 \\
    \hline 0.0625  & 0.024 & 10141 & 1.00 & 2.59 & 2.59 \\
    \hline 0.03125 & 0.012 & 39035 & 1.00 & 2.58 & 2.58 \\
    \hline
  \end{tabular}
  \caption{Relative $H^1$ error ($err$), number of nodes ($NV$) and effectivity
    indices for mesh iteration $15$ for variable direction of $b$ and $\eps =1$}
  \label{tab:adapt_bv_e1}
\end{table}

Tables \ref{tab:adapt_bc_e1} and \ref{tab:adapt_bv_e1} show the
results for $b$ field with constant and variable direction
respectively. The values in the tables are given after 15 iterations
of mesh adaptation algorithm. In both cases the optimal convergence is
obtained. The true error is clearly related to the prescribed error
tolerance $TOL$ and the node number is multiplied by 4 every time
$TOL$ is divided by 2. The Z-Z effectivity index converges to 1 with
$TOL$ and the values of indexes for error indicators remain almost
constant. This is not surprising since in this case the proposed error
indicators reduce to the standard {\it a posteriori} error indicator
studied before. The adapted meshes are
presented on Figure \ref{fig:mesh_eps1}. 

Numerical relative error obtained on the isotropic uniform mesh with
$h=0.00625$ (31325 mesh points) give the relative error equal to
$0.021$, which is comparable with the results obtained for
$TOL=0.0625$. The adapted giving the same numerical precision are
three times smaller.

\def\xxxa{0.46\textwidth}
\begin{figure}[!ht]
  \centering
  \subfigure[solution for constant $b$]
  {\includegraphics[angle=0,width=\xxxa]{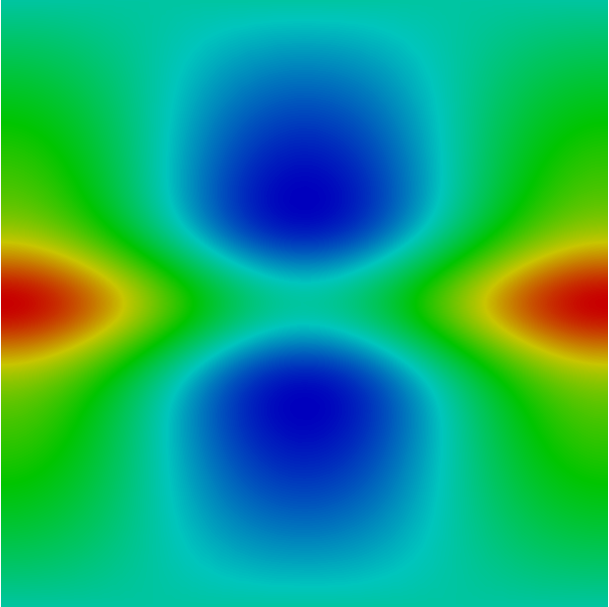}}
  \subfigure[adapted mesh]
  {\includegraphics[angle=0,width=\xxxa]{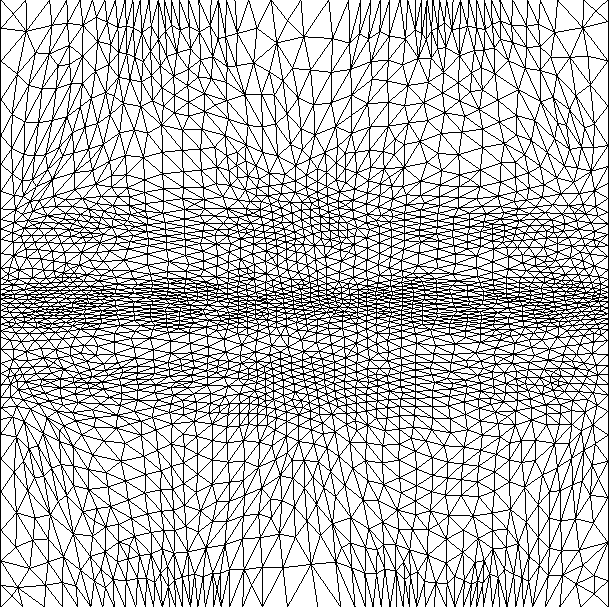}}
\\
  \subfigure[solution for variable $b$]
  {\includegraphics[angle=0,width=\xxxa]{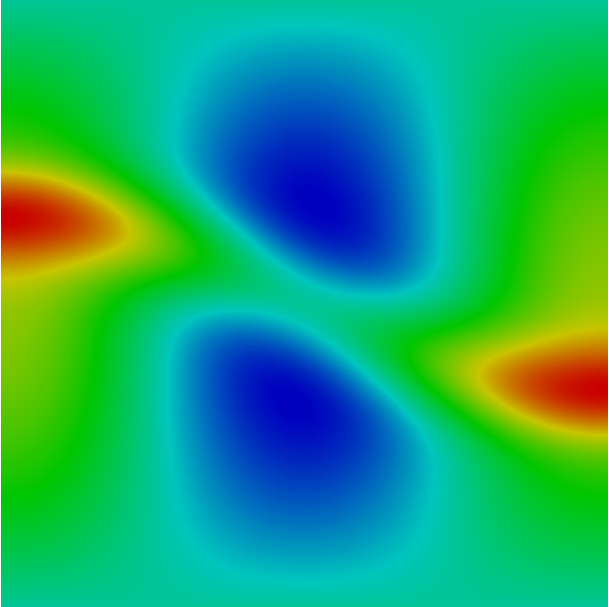}}
  \subfigure[adapted mesh]
  {\includegraphics[angle=0,width=\xxxa]{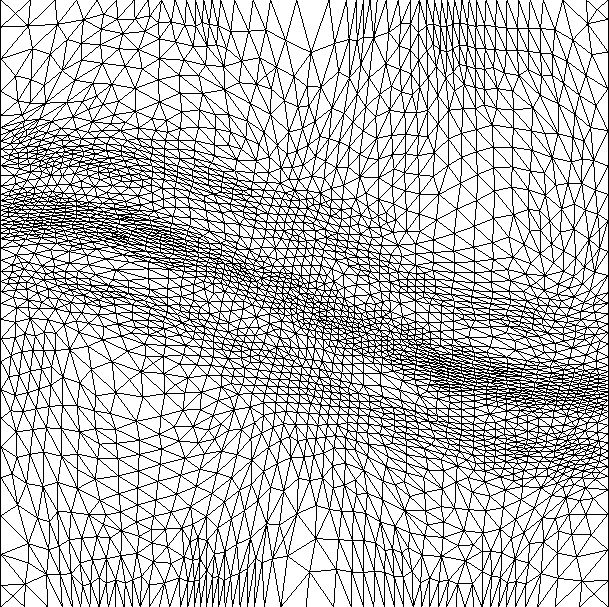}}

  \caption{Exact solution and meshes obtained after 15 iterations for
    $TOL=0.125$ with $\eps=1$  for constant and variable direction of $b$.}
  \label{fig:mesh_eps1}
\end{figure}

$ $
\vspace{1ex}

\paragraph{constant direction of $b$ ($\alpha = 0$), large anisotropy $\eps =10^{-10}$}

$ $
\vspace{1ex}

In the next test case we consider large anisotropy $\eps = 10^{-10}$
and aligned $b$ direction. The simplified error indicator and the full
error indicator are no longer equivalent. The results presented in
Table \ref{tab:adapt_bc_e-10} display the true error and effectivity
indexes obtained by applying those two different algorithms. In this
particular case we display results after 30 mesh adaptations. The
number is bigger than in previous case in order to allow the algorithm
to fully converge and exploit the reduced dimensionality of this
particular test. Note that in both cases the true error is comparable
and converges with $TOL$. The Zienkiewicz-Zhu effectivity index is
close to 1 for both error indicator. The aspect ratio for the smallest
$TOL$ studied is over 500. The simplified error indicator seems to
perform better : the mesh size for the smallest $TOL$ tested is three
times smaller than for the full error indicator. The relative $H^1$
error is also slightly smaller for the simplified error
indicator. 
The adapted meshes are presented on Figure
\ref{fig:mesh_bcst_eps1e-10}.

Numerical relative error obtained on the isotropic uniform mesh with
$h=0.00625$ (31325 mesh points) give the relative error equal to
$0.035$, which is comparable with the results obtained for
$TOL=0.0625$. The adapted giving the same numerical precision are
115 (40) times smaller for the simplified (full) error indicator.

\def\xxxa{0.31\textwidth}
\begin{figure}[!ht]
  \centering
  \subfigure[solution]
  {\includegraphics[angle=0,width=\xxxa]{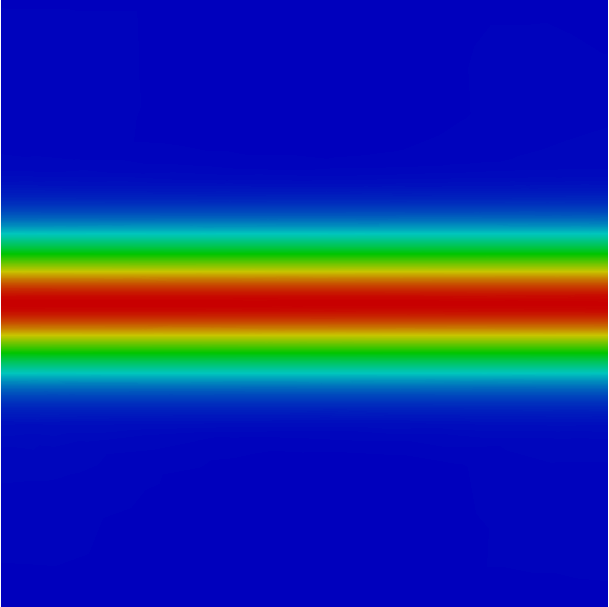}}
  \subfigure[full error indicator]
  {\includegraphics[angle=0,width=\xxxa]{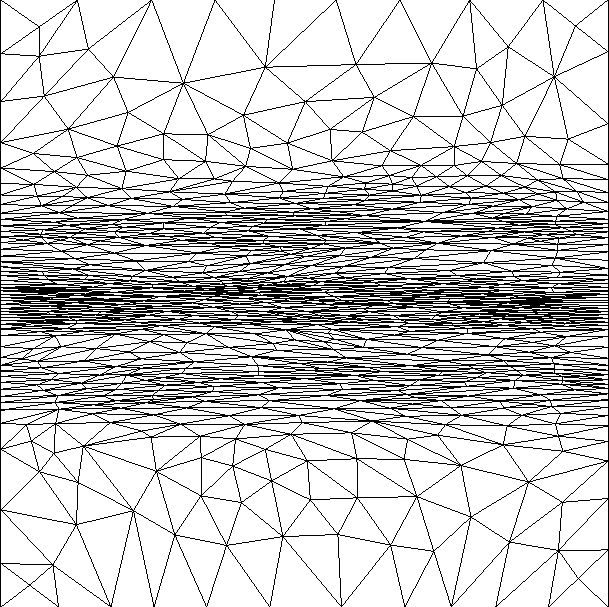}}
  \subfigure[simplified error indicator]
  {\includegraphics[angle=0,width=\xxxa]{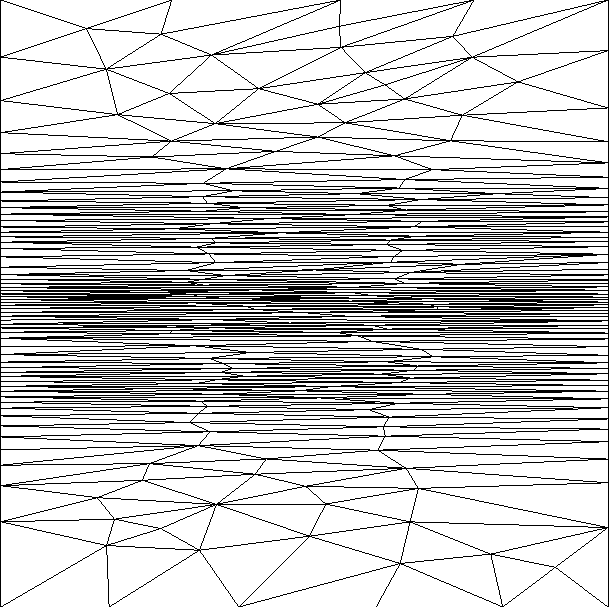}}

  \caption{Exact solution and meshes obtained after 30 iterations for
    $TOL=0.125$ with $\eps=10^{-10}$  and constant direction of $b$.}
  \label{fig:mesh_bcst_eps1e-10}
\end{figure}

\begin{table}
  \centering
  \begin{tabular}{|c||c|c|c|c|c|c|}
    \hline\rule{0pt}{2.5ex}
    $TOL$ & $err$ & $NV$ & $(\frac{h_2}{h_1})_{max}$& $(\frac{h_2}{h_1})_{avg}$ & $ei^{ZZ}$ & $ei^{A}$ \\
    \hline
    \hline 0.25    & 0.072 &   272 &  67 &  12 & 1.01 & 3.20 \\
    \hline 0.125   & 0.037 &   758 &  86 &  14 & 1.01 & 3.26 \\
    \hline 0.0625  & 0.018 &  2435 &  91 &  17 & 0.99 & 3.32 \\
    \hline 0.03125 & 0.0093 &  6642 & 296 & 23 & 0.98 & 3.28 \\
    \hline
  \end{tabular}
  \vspace{1ex}
  \\full error indicator\\
  \vspace{2ex}

  \begin{tabular}{|c||c|c|c|c|c|c|}
    \hline\rule{0pt}{2.5ex}
    $TOL$ & $err$ & $NV$ & $(\frac{h_2}{h_1})_{max}$& $(\frac{h_2}{h_1})_{avg}$ & $ei^{ZZ}$ & $ei^{SA}$ \\
    \hline
    \hline 0.25    & 0.060 &   105 &  130 &  35 & 1.01 & 3.65 \\
    \hline 0.125   & 0.031 &   271 &  224 &  48 & 1.00 & 3.49 \\
    \hline 0.0625  & 0.016 &   652 &  501 &  87 & 0.98 & 3.74 \\
    \hline 0.03125 & 0.0076 & 2018 & 536 & 106 & 0.99 & 4.07 \\
    \hline
  \end{tabular}
  \vspace{1ex}
  \\simplified error indicator\\

  \caption{Relative $H^1$ error ($err$), number of nodes ($NV$), maximum aspect
    ration ($(\frac{h_2}{h_1})_{max}$), average aspect ration
    ($(\frac{h_2}{h_1})_{avg}$) and effectivity indices for mesh
    iteration $30$ for constant direction of $b$ and $\eps =10^{-10}$}
  \label{tab:adapt_bc_e-10}
\end{table}

$ $
\vspace{1ex}

\paragraph{variable direction of $b$ ($\alpha = 2$), large anisotropy $\eps =10^{-10}$}
$ $
\vspace{1ex}

In the last studied test case we have applied the mesh adaptation
algorithm to the problem with large anisotropy with variable
direction. Table \ref{tab:adapt_bv_e1} shows obtained results of
numerical simulations. The simplified error indicator performs more
efficiently than the full error indicator. Poor performance of the
full error indicator for the smallest tolerance is caused by the
perpendicular derivatives of $q^\eps_h$ which cause the over
refinement in the direction perpendicular to the anisotropy
direction. The resulting mesh is almost eight times bigger. For
smaller values of the tolerance the difference in mesh sizes is much
smaller and the meshes constructed for the full error indicator give
slightly better precision. In both cases the Z-Z error estimator is
close to 1. The adapted meshes are
presented on Figure \ref{fig:mesh_bvar_eps1e-10}.

Numerical relative error obtained on the isotropic uniform mesh with
$h=0.00625$ (31325 mesh points) give the relative error equal to
$0.038$, which is comparable with the results obtained for
$TOL=0.0625$. The adapted giving the same numerical precision are
20 (10) times smaller for the simplified (full) error indicator.

\def\xxxa{0.31\textwidth}
\begin{figure}[!ht]
  \centering
  \subfigure[solution]
  {\includegraphics[angle=0,width=\xxxa]{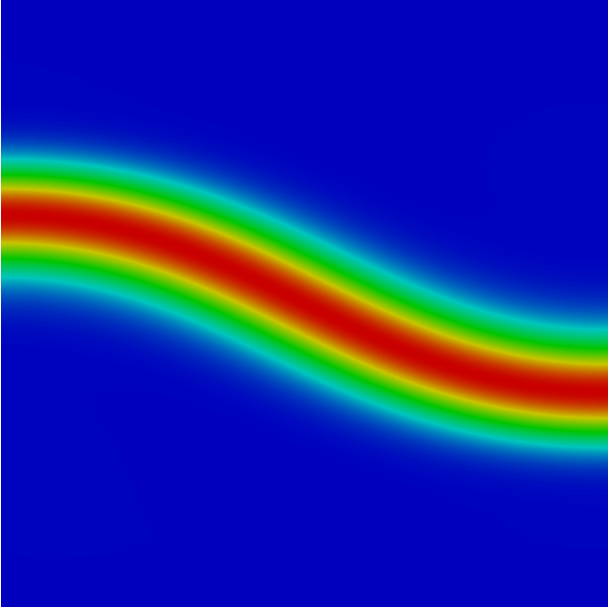}}
  \subfigure[full error indicator]
  {\includegraphics[angle=0,width=\xxxa]{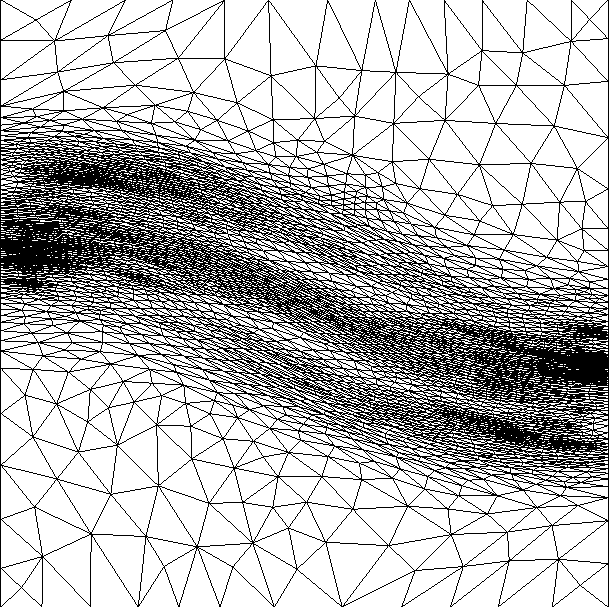}}
  \subfigure[simplified error indicator]
  {\includegraphics[angle=0,width=\xxxa]{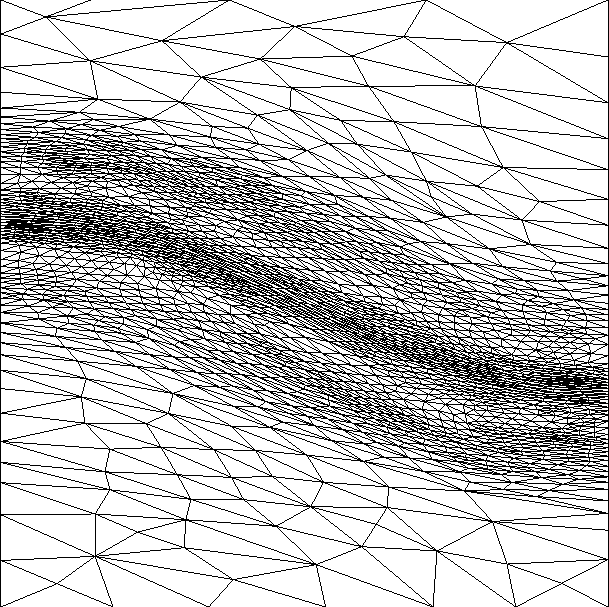}}

  \caption{Exact solution and meshes obtained after 30 iterations for
    $TOL=0.125$ with $\eps=10^{-10}$  and variable direction of $b$.}
  \label{fig:mesh_bvar_eps1e-10}
\end{figure}

\begin{table}
  \centering
  \begin{tabular}{|c||c|c|c|c|c|c|}
    \hline\rule{0pt}{2.5ex}
    $TOL$ & $err$ & $NV$ & $(\frac{h_2}{h_1})_{max}$& $(\frac{h_2}{h_1})_{avg}$ & $ei^{ZZ}$ & $ei^{A}$ \\
    \hline
    \hline 0.5    & 0.137 &   183 &  16 & 5.4 & 1.04 &  3.21 \\
    \hline 0.25   & 0.070 &   587 &  21 & 5.8 & 1.01 &  3.45 \\
    \hline 0.125  & 0.033 &  3195 &  54 & 8.2 & 0.99 &  3.83 \\
    \hline 0.0625 & 0.015 & 52658 & 165 &  17 & 0.98 &  4.88 \\
    \hline
  \end{tabular}
  \vspace{1ex}
  \\full error indicator\\
  \vspace{2ex}

  \begin{tabular}{|c||c|c|c|c|c|c|}
    \hline\rule{0pt}{2.5ex}
    $TOL$ & $err$ & $NV$ & $(\frac{h_2}{h_1})_{max}$& $(\frac{h_2}{h_1})_{avg}$ & $ei^{ZZ}$ & $ei^{A}$ \\
    \hline
    \hline 0.5     & 0.15  &   138 & 27 & 5.98 & 1.03 & 3.18 \\
    \hline 0.25    & 0.073 &   445 & 25 & 6.88 & 1.01 & 3.34 \\
    \hline 0.125   & 0.037 &  1720 & 33 & 7.07 & 1.00 & 3.29 \\
    \hline 0.0625  & 0.018 &  6884 & 43 & 7.48 & 0.97 & 3.36 \\
    \hline
  \end{tabular}
  \vspace{1ex}
  \\simplified error indicator\\

  \caption{Relative $H^1$ error ($err$), number of nodes ($NV$), maximum aspect
    ration ($(\frac{h_2}{h_1})_{max}$), average aspect ration
    ($(\frac{h_2}{h_1})_{avg}$) and effectivity indices for mesh
    iteration $15$ for variable direction of $b$ and $\eps =10^{-10}$}
  \label{tab:adapt_bv_e1}
\end{table}

\section{Conclusion}

A stabilized Asymptotic Preserving method for strongly anisotropic
Laplace equation has been proposed and tested numerically. The error
indicators including first order derivatives has been developed for
this reformulated problem. Numerical experiments show the performance
of the remeshing routine. The resulting meshes are considerably
smaller by the factor from 3 to 115 than the isotropic uniform grids
giving the same precision. The biggest gain is obtained for strong
anisotropy in the constant direction.

\bibliographystyle{abbrv}
\bibliography{bib_aniso}

\begin{thebibliography}{10}

\bibitem{SPT}
J.~Adam, J.~Boeuf, N.~Dubuit, M.~Dudeck, L.~Garrigues, D.~Gresillon, A.~Heron,
  G.~Hagelaar, V.~Kulaev, N.~Lemoine, et~al.
\newblock {Physics, simulation and diagnostics of Hall effect thrusters}.
\newblock {\em Plasma Physics and Controlled Fusion}, 50:124041, 2008.

\bibitem{AinsworthOden97}
M.~Ainsworth and J.~T. Oden.
\newblock A posteriori error estimation in finite element analysis.
\newblock {\em Comput. Methods Appl. Mech. Engrg.}, 142(1-2):1--88, 1997.

\bibitem{AZCZ89}
M.~Ainsworth, J.~Z. Zhu, A.~W. Craig, and O.~C. Zienkiewicz.
\newblock Analysis of the {Z}ienkiewicz-{Z}hu a posteriori error estimator in
  the finite element method.
\newblock {\em Internat. J. Numer. Methods Engrg.}, 28(9):2161--2174, 1989.

\bibitem{porous1}
S.~F. Ashby, W.~J. Bosl, R.~D. Falgout, S.~G. Smith, A.~F. Tompson, and T.~J.
  Williams.
\newblock {A Numerical Simulation of Groundwater Flow and Contaminant Transport
  on the CRAY T3D and C90 Supercomputers}.
\newblock {\em International Journal of High Performance Computing
  Applications}, 13(1):80--93, 1999.

\bibitem{Beer}
M.~Beer, S.~Cowley, and G.~Hammett.
\newblock {Field-aligned coordinates for nonlinear simulations of tokamak
  turbulence}.
\newblock {\em Physics of Plasmas}, 2(7):2687, 1995.

\bibitem{besse}
C.~Besse, F.~Deluzet, C.~Negulescu, and C.~Yang.
\newblock Three dimensional simulation of ionsphoric plasma disturbences.
\newblock in preparation.

\bibitem{bl2d}
H.~Borouchaki and P.~Laug.
\newblock The bl2d mesh generator: Beginner's guide, user's and programmer's
  manua l.
\newblock Technical Report RT-0194, Institut National de Recherche en
  Informatique et Automatique (INRIA), Rocquencourt, 78153 Le Chesnay, France,
  1996.

\bibitem{BrezziDouglas88}
F.~Brezzi and J.~Douglas, Jr.
\newblock Stabilized mixed methods for the {S}tokes problem.
\newblock {\em Numer. Math.}, 53(1-2):225--235, 1988.

\bibitem{burman-picasso-03}
E.~Burman and M.~Picasso.
\newblock Anisotropic, adaptive finite elements for the computation of a
  solutal dendrite.
\newblock {\em Interfaces Free Bound.}, 5(2):103--127, 2003.

\bibitem{DDLNN}
P.~Degond, F.~Deluzet, A.~Lozinski, J.~Narski, and C.~Negulescu.
\newblock Duality-based asymptotic-preserving method for highly anisotropic
  diffusion equations.
\newblock {\em Commun. Math. Sci.}, 10(1):1--31, 2012.

\bibitem{Navoret}
P.~Degond, F.~Deluzet, L.~Navoret, A.-B. Sun, and M.-H. Vignal.
\newblock Asymptotic-preserving particle-in-cell method for the vlasov-poisson
  system near quasineutrality.
\newblock {\em J. Comput. Phys.}, 229(16):5630--5652, 2010.

\bibitem{DDN}
P.~Degond, F.~Deluzet, and C.~Negulescu.
\newblock An asymptotic preserving scheme for strongly anisotropic elliptic
  problems.
\newblock {\em Multiscale Model. Simul.}, 8(2):645--666, 2009/10.

\bibitem{Sangam}
P.~Degond, F.~Deluzet, A.~Sangam, and M.-H. Vignal.
\newblock An asymptotic preserving scheme for the {E}uler equations in a strong
  magnetic field.
\newblock {\em J. Comput. Phys.}, 228(10):3540--3558, 2009.

\bibitem{DLNN}
P.~Degond, A.~Lozinski, J.~Narski, and C.~Negulescu.
\newblock An asymptotic-preserving method for highly anisotropic elliptic
  equations based on a micro-macro decomposition.
\newblock {\em J. Comput. Phys.}, 231(7):2724--2740, 2012.

\bibitem{FormaggiaPerotto01}
L.~Formaggia and S.~Perotto.
\newblock New anisotropic a priori error estimates.
\newblock {\em Numer. Math.}, 89(4):641--667, 2001.

\bibitem{FormaggiaPerotto03}
L.~Formaggia and S.~Perotto.
\newblock Anisotropic error estimates for elliptic problems.
\newblock {\em Numer. Math.}, 94(1):67--92, 2003.

\bibitem{TomHou}
T.~Y. Hou and X.-H. Wu.
\newblock A multiscale finite element method for elliptic problems in composite
  materials and porous media.
\newblock {\em J. Comput. Phys.}, 134(1):169--189, 1997.

\bibitem{ShiJin}
S.~Jin.
\newblock Efficient asymptotic-preserving ({AP}) schemes for some multiscale
  kinetic equations.
\newblock {\em SIAM J. Sci. Comput.}, 21(2):441--454, 1999.

\bibitem{Kelley2}
M.~Kelley, W.~Swartz, and J.~Makela.
\newblock {Mid-latitude ionospheric fluctuation spectra due to secondary
  E{\~U}B instabilities}.
\newblock {\em Journal of Atmospheric and Solar-Terrestrial Physics},
  66(17):1559--1565, 2004.

\bibitem{Kes_Oss}
M.~Keskinen, S.~Ossakow, and B.~Fejer.
\newblock {Three-dimensional nonlinear evolution of equatorial ionospheric
  spread-F bubbles}.
\newblock {\em Geophys. Res. Lett}, 30(16):4--1--4--4, 2003.

\bibitem{semicond}
T.~Manku and A.~Nathan.
\newblock Electrical properties of silicon under nonuniform stress.
\newblock {\em Journal of Applied Physics}, 74(3):1832--1837, 1993.

\bibitem{MichelettiPerottoPicasso03}
S.~Micheletti, S.~Perotto, and M.~Picasso.
\newblock Stabilized finite elements on anisotropic meshes: a priori error
  estimates for the advection-diffusion and the {S}tokes problems.
\newblock {\em SIAM J. Numer. Anal.}, 41(3):1131--1162 (electronic), 2003.

\bibitem{picasso-03}
M.~Picasso.
\newblock An anisotropic error indicator based on {Z}ienkiewicz-{Z}hu error
  estimator: application to elliptic and parabolic problems.
\newblock {\em SIAM J. Sci. Comput.}, 24(4):1328--1355 (electronic), 2003.

\bibitem{picasso-03a}
M.~Picasso.
\newblock Numerical study of the effectivity index for an anisotropic error
  indicator based on {Z}ienkiewicz-{Z}hu error estimator.
\newblock {\em Comm. Numer. Methods Engrg.}, 19(1):13--23, 2003.

\bibitem{picasso-06}
M.~Picasso.
\newblock Adaptive finite elements with large aspect ratio based on an
  anisotropic error estimator involving first order derivatives.
\newblock {\em Comput. Methods Appl. Mech. Engrg.}, 196(1-3):14--23, 2006.

\bibitem{Rodriguez94}
R.~Rodr{\'{\i}}guez.
\newblock Some remarks on {Z}ienkiewicz-{Z}hu estimator.
\newblock {\em Numer. Methods Partial Differential Equations}, 10(5):625--635,
  1994.

\bibitem{ocean}
A.~M. Tr{\'e}guier.
\newblock Mod\'elisation num\'erique pour l'oc\'eanographie physique.
\newblock {\em Ann. Math. Blaise Pascal}, 9(2):345--361, 2002.

\bibitem{image1}
W.-W. Wang and X.-C. Feng.
\newblock Anisotropic diffusion with nonlinear structure tensor.
\newblock {\em Multiscale Model. Simul.}, 7(2):963--977, 2008.

\bibitem{Weickert}
J.~Weickert.
\newblock {\em Anisotropic diffusion in image processing}.
\newblock European Consortium for Mathematics in Industry. B. G. Teubner,
  Stuttgart, 1998.

\bibitem{ZZ87}
O.~C. Zienkiewicz and J.~Z. Zhu.
\newblock A simple error estimator and adaptive procedure for practical
  engineering analysis.
\newblock {\em Internat. J. Numer. Methods Engrg.}, 24(2):337--357, 1987.

\bibitem{ZZ92}
O.~C. Zienkiewicz and J.~Z. Zhu.
\newblock The superconvergent patch recovery and a posteriori error estimates.
  {I}. {T}he recovery technique.
\newblock {\em Internat. J. Numer. Methods Engrg.}, 33(7):1331--1364, 1992.

\end{thebibliography}

\end{document}